\def\@tocline#1#2#3#4#5#6#7{\relax
  \ifnum #1>\c@tocdepth 
  \else
    \par \addpenalty\@secpenalty\addvspace{#2}%
    \begingroup \hyphenpenalty\@M
    \@ifempty{#4}{%
      \@tempdima\csname r@tocindent\number#1\endcsname\relax
    }{%
      \@tempdima#4\relax
    }%
    \parindent\z@ \leftskip#3\relax \advance\leftskip\@tempdima\relax
    \rightskip\@pnumwidth plus4em \parfillskip-\@pnumwidth
    #5\leavevmode\hskip-\@tempdima
      \ifcase #1
       \or\or \hskip 1em \or \hskip 2em \else \hskip 3em \fi%
      #6\nobreak\relax
    \dotfill\hbox to\@pnumwidth{\@tocpagenum{#7}}\par
    \nobreak
    \endgroup
  \fi}
\newtheorem{theorem}{Theorem}[section]
\newtheorem{lemma}[theorem]{Lemma}
\newtheorem{proposition}[theorem]{Proposition}
\theoremstyle{definition}
\newtheorem{defn}[theorem]{Definition}
\newtheorem{remark}[theorem]{Remark}
\newtheorem{example}[theorem]{Example}
\newcommand{\mc}{\mathcal}
\newcommand{\mf}{\mathbf}
\newcommand{\mb}{\mathbb}
\newcommand{\wh}{\widehat}
\newcommand{\id}{\mathrm{id}}
\DeclareMathOperator{\ab}{Z}
\DeclareMathOperator{\tran}{\Theta}
\DeclareMathOperator{\diam}{diam}
\DeclareMathOperator{\q}{c}
\DeclareMathOperator{\ns}{X}
\DeclareMathOperator{\nss}{Y}
\DeclareMathOperator{\co}{\circ\hspace{-0.02 cm}}
\DeclareMathOperator{\cu}{C}
\DeclareMathOperator{\cor}{Cor}
\newcommand*{\db}[1]{\llbracket #1\rrbracket}
\begin{document}

\title[On nilspace systems and their morphisms]{On nilspace systems and their morphisms}

\author{Pablo Candela}
\address{Universidad Aut\'onoma de Madrid and ICMAT\\ Ciudad Universitaria de Cantoblanco\\ Madrid 28049\\ Spain}
\email{pablo.candela@uam.es}

\author{Diego Gonz\'alez-S\'anchez}
\address{Universidad Aut\'onoma de Madrid and ICMAT\\ Ciudad Universitaria de Cantoblanco\\ Madrid 28049\\ Spain}
\email{diego.gonzalezs@predoc.uam.es}

\author{Bal\'azs Szegedy}
\address{MTA Alfr\'ed R\'enyi Institute of Mathematics\\ 
Re\'altanoda utca 13-15\\
Budapest, Hungary, H-1053}
\email{szegedyb@gmail.com}

\begin{abstract}
A nilspace system is a generalization of a nilsystem, consisting of a compact nilspace $\ns$ equipped with a group of nilspace translations acting on $\ns$. Nilspace systems appear in different guises in several recent works, and this motivates the study of these systems per se as well as their relation to more classical types of systems. In this paper we study morphisms of nilspace systems, i.e., nilspace morphisms with the additional property of being consistent with the actions of the given translations. A nilspace morphism does not necessarily have this property, but one of our main results shows that it factors through some other morphism which does have the property. As an application we obtain a strengthening of the inverse limit theorem for compact nilspaces, valid for nilspace systems. This is used in work of the first and third named authors to generalize the celebrated structure theorem of Host and Kra on characteristic factors. \smallskip \\ 
\noindent \textbf{Keywords.} Compact nilspaces, nilspace systems, nilsystems, morphisms.
\end{abstract}
\date{}
\vspace*{-1.5cm}
\maketitle
\vspace{-0.5cm}
\section{Introduction}

\noindent Nilsystems are important examples of measure-preserving systems and their study has a long history in ergodic theory, beginning with works including \cite{AGH,Furstentorus,Malcev,Parry}. This study gained strong motivation especially through the essential role of nilsystems in the structural theory of measure-preserving systems and the analysis of multiple ergodic averages, topics that have kept growing with vibrant progress to the present day. We refer to the book \cite{HKbook} for a thorough treatment of these rich topics and for a broad bibliography.

In recent works stemming from the connections between ergodic theory and arithmetic combinatorics, objects known as \emph{compact nilspaces} are found to be useful generalizations of nilmanifolds. Similarly to how nilsystems are constructed using nilmanifolds, one can define  generalizations of nilsystems using compact nilspaces, thus obtaining measure-preserving systems that we call \emph{nilspace systems}. These systems emerge as natural objects to consider when trying to extend the structural theory of measure-preserving systems, or that of topological dynamical systems, beyond works such as that of Host and Kra \cite{HK}, Ziegler \cite{Ziegler}, or Host Kra and Maass \cite{HKM}, and especially when seeking extensions valid for nilpotent group actions; see \cite{CScouplings,GGY}.

The theory of nilspaces is growing into a subject of intrinsic interest. In addition to the original preprint \cite{CamSzeg}, there are now several references that detail the basics of this theory; see  \cite{Cand:Notes1,Cand:Notes2,GMV1,GMV2}. To state the definition of a nilspace system below, we use the notions of a compact nilspace $\ns$ and of the translation group $\tran(\ns)$, which can be recalled from \cite[Definition 1.0.2]{Cand:Notes2} and \cite[Definition 3.2.27]{Cand:Notes1} respectively. Recall also that, on a \emph{compact} nilspace, translations are supposed to be homeomorphisms; see \cite[\S 2.9]{Cand:Notes2}.

\begin{defn}
A \emph{nilspace system} is a triple $(\ns, H, \phi)$ where $\ns$ is a compact nilspace, $H$ is a topological group, and $\phi:H \to \tran(\ns)$ is a continuous group homomorphism. We say that $(\ns, H, \phi)$ is a \emph{$k$-step nilspace system} if $\ns$ is a $k$-step nilspace.
\end{defn}
\noindent Nilspace systems are indeed generalizations of nilsystems: given a nilmanifold $G/\Gamma$ and a map $T:G/\Gamma\to G/\Gamma$, $x\Gamma\mapsto hx\Gamma$ for some $h\in G$, it is seen from the definitions that $T$ is a translation on $\ns$, where $\ns$ is the nilspace obtained by endowing $G/\Gamma$ with the cube structure induced by the Host-Kra cubes $\cu^n(G_\bullet)$ relative to any given filtration $G_\bullet$ on $G$ (see \cite[Definition 2.2.3 and Proposition 2.3.1]{Cand:Notes1}). A nilspace system can be viewed as a measure-preserving system, by equipping the compact nilspace with its Haar probability measure, which is invariant under any translation; see \cite{CamSzeg} and \cite[Proposition 2.2.5 and Corollary 2.2.7]{Cand:Notes2}. The term \emph{nilsystem} can also be used more generally, when instead of a single map $T$ we have an action of a group $H$ (usually supposed to be countable and discrete) on $G/\Gamma$ via a homomorphism $\varphi:H\to G$, action defined by $(h,x\Gamma)\mapsto \varphi(h)x \Gamma$.

It is natural to seek expressions for nilspace systems in terms of nilsystems, so as to reduce questions involving the former systems to questions involving the better-known latter systems. One of the central results in nilspace theory is the inverse limit theorem \cite[Theorem 4]{CamSzeg}; in particular this result characterizes a general class of compact nilspaces (those with connected structure groups) as inverse limits of nilmanifolds. This motivates the problem of expressing nilspace systems as inverse limits of nilsystems. Similar problems are treated by Gutman, Manners and Varj\'u in \cite{GMV3} from the different viewpoint of applications in topological dynamics, concerning the \emph{regionally proximal relations}. Further motivation comes from the use  of nilspace systems in \cite{CScouplings} to extend the structure theorem of Host and Kra \cite[Theorem 10.1]{HK} to nilpotent group actions.

To obtain such expressions of nilspace systems in terms of nilsystems, it is suitable first to focus on certain more fundamental questions concerning nilspace systems in themselves, and especially on how translations on a compact nilspace $\ns$ can interact with continuous\footnote{In this paper every morphism between compact nilspaces is automatically supposed to be a continuous map,  and from now on we usually do not specify this continuity explicitly.} morphisms from $\ns$ to another compact nilspace. As we show in this paper, once these questions are solved, the sought expressions for nilspace systems can be obtained swiftly. 

One fundamental question of the above kind asks whether a given nilspace morphism satisfies the following property relative to a given translation.
\begin{defn}\label{def:transcons}
Let $\ns,\nss$ be nilspaces, let $\psi:\ns\to\nss$ be a morphism, and let $\alpha$ be a translation in $\tran(\ns)$. We say that $\psi$ is \emph{$\alpha$-consistent} if for every $x,y\in \ns$ we have $\big( \psi(x)=\psi(y)\big) \Rightarrow \big(\psi\co\alpha (x)=\psi\co\alpha(y)\big)$. Given a set of translations $H\subset\tran(\ns)$, we say that $\psi$ is \emph{$H$-consistent} if $\psi$ is $\alpha$-consistent for each $\alpha\in H$.
\end{defn}
\noindent The question of whether a morphism is $\alpha$-consistent is particularly relevant for the special class of morphisms termed \emph{fiber-surjective morphisms}. Introduced in \cite{CamSzeg} (see also  \cite[Definition 3.3.7]{Cand:Notes2}), these morphisms play an important role in nilspace theory. The term \emph{fibration} was introduced in \cite[Definition 7.1]{GMV1} for a notion which is equivalent to that of a fiber-surjective morphism as far as nilspaces are concerned, and which gives a useful alternative definition; we shall use the two terms interchangeably. We recall these notions in Definition \ref{def:fsmorph} below. This definition uses the characteristic factors $\ns_n=\mc{F}_n(\ns)$ of a nilspace $\ns$, and the associated canonical projections $\pi_n:\ns\to\ns_n$ (see \cite[Definition 3.2.3]{Cand:Notes1}). When we need to emphasize on which nilspace $\ns$ the map $\pi_n$ is being considered, we denote this map by $\pi_{n,\ns}$. We also use the notation $\db{n}$ for the discrete $n$-cube $\{0,1\}^n$, and $\cu^n(\ns)$ for the set of $n$-cubes on $\ns$. Finally, let us recall the notion of an $n$-corner on $\ns$, that is, a map $\q':\db{n}\setminus\{1^n\}\to \ns$ (where $1^n=(1,\ldots,1)$) such that the restriction of $\q'$ to any $(n-1)$-face of $\db{n}$ not  containing $1^n$ is an $(n-1)$-cube (see \cite[Definition 1.2.1]{Cand:Notes1}). We denote the set of $n$-corners on $\ns$ by $\cor^n(\ns)$.

\begin{defn}\label{def:fsmorph}
Let $\ns,\nss$ be nilspaces. A morphism $\psi:\ns\to\nss$ is said to be \emph{fiber-surjective}, or a \emph{fibration}, if for every $n\geq 0$ it maps $\pi_n$-fibers to $\pi_n$-fibers, that is, for every fiber $\pi_{n,\ns}^{-1}(x)$, $x\in \ns_n$, we have $\psi(\pi_{n,\ns}^{-1}(x))=\pi_{n,\nss}^{-1}(y)$ for some $y\in\nss_n$. Equivalently $\psi$ is a fibration if for every $n$-corner $\q'$ on $\ns$, and every completion $\q''\in \cu^n(\nss)$ of the $n$-corner $\psi\co\q'$, there is $\q\in\cu^n(\ns)$ completing $\q'$ such that $\psi\co\q=\q''$.
\end{defn}
\begin{remark}\label{rem:fibration} Note that a fibration is always a surjective map, since for every $k$-step nilspace $\nss$ the fiber of $\pi_0$ is the whole of $\nss$ (note that $\nss_0$ is a singleton).
\end{remark}
\noindent The equivalence stated in Definition \ref{def:fsmorph} follows from the properties of the equivalence relations $\sim_n$ associated with the quotient maps $\pi_n$ (see \cite[Definition 3.2.3]{Cand:Notes1}).

The following simple result illustrates the relevance of $\alpha$-consistency for fibrations.
\begin{restatable}{lemrestate}{basic}\label{lem:basic}
\textit{Let $\ns,\nss$ be compact nilspaces, let $\psi:\ns\to\nss$ be a fibration, and let $\alpha \in \tran(\ns)$. If $\psi$ is $\alpha$-consistent then we can define $\beta\in\tran(\nss)$ by setting $\beta(y)=\psi(\alpha(x))$ for any $x\in \psi^{-1}(y)$. Moreover, if for every translation $\alpha\in \tran(\ns)$ such that $\psi$ is $\alpha$-consistent, we denote by $\wh{\psi}(\alpha)$ the corresponding translation $\beta\in\tran(\nss)$ thus defined, then, whenever $\psi$ is $\{\alpha_1,\alpha_2\}$-consistent, we have that $\psi$ is $\alpha_1\alpha_2$-consistent and $\wh{\psi}(\alpha_1\alpha_2)=\wh{\psi}(\alpha_1)\,\wh{\psi}(\alpha_2)$.}
\end{restatable}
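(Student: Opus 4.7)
The plan has two parts, corresponding to the two claims of the lemma.

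For the first claim: the well-definedness of $\beta(y)=\psi(\alpha(x))$ for $x\in\psi^{-1}(y)$ is precisely the content of $\alpha$-consistency, and $\beta$ is defined on all of $\nss$ because $\psi$ is surjective by Remark \ref{rem:fibration}. Continuity is then automatic from the relation $\beta\co\psi=\psi\co\alpha$ together with the fact that $\psi$, being a continuous surjection from a compact space to a Hausdorff space, is a quotient map.

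The substantive step is to verify that $\beta$ lies in $\tran(\nss)$, i.e., satisfies the cube-preservation axiom for translations from \cite[Definition 3.2.27]{Cand:Notes1}. The strategy is to lift any relevant cube $c'\in\cu^n(\nss)$ to a cube $c\in\cu^n(\ns)$ with $\psi\co c=c'$ by means of the fibration property of $\psi$ (Definition \ref{def:fsmorph}), then apply the translation axiom for $\alpha$ on $\ns$, and finally push the resulting cube back down via $\psi$, using that $\psi$ preserves cubes as a morphism. The identity $\psi\co\alpha\co c=\beta\co\psi\co c=\beta\co c'$ ensures that the cube thus produced on $\nss$ is precisely the cube required by the translation axiom for $\beta$. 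I expect this to be the main obstacle; it should reduce to iterated application of the corner-completion characterization of fibrations in order to obtain cube-surjectivity of $\psi$ in the required dimension.

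For the second claim, both sub-statements follow by direct chases. If $\psi(x)=\psi(y)$, then by $\alpha_2$-consistency $\psi(\alpha_2(x))=\psi(\alpha_2(y))$, and applying $\alpha_1$-consistency to the pair $(\alpha_2(x),\alpha_2(y))$ yields $\psi(\alpha_1\alpha_2(x))=\psi(\alpha_1\alpha_2(y))$, so $\psi$ is $\alpha_1\alpha_2$-consistent. The homomorphism identity $\wh{\psi}(\alpha_1\alpha_2)=\wh{\psi}(\alpha_1)\,\wh{\psi}(\alpha_2)$ then follows by evaluating both sides at an arbitrary $y=\psi(x)$ using the defining formula for $\wh{\psi}$ established in the first claim.
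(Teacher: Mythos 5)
Your proposal is correct and follows essentially the same route as the paper: well-definedness from $\alpha$-consistency, cube-preservation by lifting cubes through the fibration (the paper cites \cite[Lemma 3.3.9]{Cand:Notes1} for this cube-surjectivity, which is exactly the iterated corner-completion you describe) and pushing $\alpha^F(\q')$ back down via the equivariance $\psi\co\alpha=\beta\co\psi$, continuity from $\psi$ being a closed (hence quotient) map, and a direct chase for the second claim.
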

\noindent In particular, if $S$ is a subset of $\tran(\ns)$ such that $\psi$ is $S$-consistent, and $H=\langle S\rangle$ is the subgroup of $\tran(\ns)$ generated by $S$, then $\psi$ is also $H$-consistent, the map $\wh{\psi}$ is a homomorphism $H\to \tran(\nss)$, and $\wh{\psi}$ is an $(H,\wh{\psi}(H))$-equivariant map, i.e., for all $\alpha\in H$, $x\in \ns$ we have $\psi(\alpha(x))=\wh{\psi}(\alpha)(\psi(x)
)$. We leave the proof of Lemma \ref{lem:basic} to Section \ref{sec:transconsfact}.

To what extent can $\alpha$-consistency be guaranteed for a given fibration and a given translation $\alpha$? If $\ab,\ab'$ are compact abelian groups equipped with their standard nilspace structure (see \cite[Section 2.1]{Cand:Notes1}), and $\psi:\ab\to\ab'$ is a fibration between these nilspaces, then $\psi$ is a surjective affine homomorphism (by \cite[Lemma 3.3.8]{Cand:Notes1} say), and must then clearly be $\alpha$-consistent for every $\alpha\in\tran(\ab)$ (since $\alpha$ must be of the form $\alpha(z)= z+t$ for some fixed $t\in \ab$). However, this automatic translation-consistency does not hold for fibrations between more general nilspaces; we illustrate this with Example \ref{ex:conscex} in the next  section.

While translation consistency can fail, we  prove a result that can be viewed as the next best thing, namely Theorem \ref{thm:trancons} below. Indeed, this result tells us that by performing a relatively simple refinement of the fibration's target nilspace, one can always obtain a factorization of the fibration in which the first applied map has the desired consistency. The simplicity of the refinement consists in that, if the original target nilspace is of finite rank, then the new refined nilspace is also of finite rank. Recall that a compact nilspace has finite rank if each of its structure groups is a Lie group (see \cite[Definition 2.5.1]{Cand:Notes2}). These nilspaces form a more  manageable class among general compact nilspaces, playing a role in the theory similar to the role of compact abelian Lie groups in relation to general compact abelian groups (see for instance the inverse limit theorem \cite[Theorem 2.7.3]{Cand:Notes2}).

\begin{restatable}{thmrestate}{trancons}\label{thm:trancons}
\textit{Let $\ns, \nss'$ be compact $k$-step nilspaces, with $\nss'$ of finite rank, let $\psi':\ns\to \nss'$ be a fibration, and let $H\subset \tran(\ns)$ be finite. Then there is a compact finite-rank nilspace $\nss$, an $H$-consistent fibration $\psi:\ns\to \nss$, and a fibration $p: \nss\to\nss'$ such that $\psi'=p\co\psi$.}
\end{restatable}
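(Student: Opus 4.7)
My plan is to proceed by induction on the nilspace step $k$, reducing each inductive step to the previous one by passing to the $(k-1)$-th characteristic factors. For the base case $k=1$, compact $1$-step nilspaces are compact abelian groups with their standard nilspace structure, fibrations between them are surjective affine homomorphisms, and every translation has the form $x\mapsto x+t$; as noted in the paragraph preceding the theorem, any such fibration is $\alpha$-consistent for every translation $\alpha$. One may therefore take $\nss=\nss'$, $\psi=\psi'$ and $p=\id_{\nss'}$, giving the conclusion immediately.

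For the inductive step, suppose the statement holds at step $k-1$ and consider data $(\psi':\ns\to\nss',H)$ at step $k$. I would first descend to the $(k-1)$-factors: $\ns_{k-1}=\mc{F}_{k-1}(\ns)$ and $\nss'_{k-1}=\mc{F}_{k-1}(\nss')$ are compact $(k-1)$-step nilspaces (with $\nss'_{k-1}$ still of finite rank), $\psi'$ descends to a fibration $\psi'_{k-1}:\ns_{k-1}\to\nss'_{k-1}$, and each translation $\alpha\in H$ descends to a translation $\overline{\alpha}\in\tran(\ns_{k-1})$; set $\overline{H}=\{\overline{\alpha}:\alpha\in H\}$. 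The inductive hypothesis applied to $(\psi'_{k-1},\overline{H})$ then produces a compact finite-rank $(k-1)$-step nilspace $\nss_{k-1}$, an $\overline{H}$-consistent fibration $\psi_{k-1}:\ns_{k-1}\to\nss_{k-1}$, and a fibration $p_{k-1}:\nss_{k-1}\to\nss'_{k-1}$ with $\psi'_{k-1}=p_{k-1}\co\psi_{k-1}$.

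To complete the induction I would lift this $(k-1)$-step refinement to a $k$-step refinement of $\nss'$. Any compact $k$-step nilspace is determined by its $(k-1)$-factor together with a top structure group (a compact abelian group) and a cocycle extension; the desired $\nss$ would be built by choosing as top structure group a compact abelian Lie group $A_k$ extending the top structure group $A'_k$ of $\nss'$, with a cocycle on cubes of $\nss_{k-1}$ that encodes both the original cocycle of $\nss'$ (pulled back through $p_{k-1}$) and the additional information recorded by making the translations from $H$ consistent. The map $\psi:\ns\to\nss$ would then be the natural lift of $\psi_{k-1}$ compatible with $\psi'$, and $p:\nss\to\nss'$ the induced projection extending $p_{k-1}$ and the quotient $A_k\to A'_k$.

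The main obstacle is this lifting step, specifically keeping $A_k$ a Lie group so that $\nss$ remains finite rank. The fibers of $\psi'$ over points of $\nss'_{k-1}$ are $A'_k$-torsors, and the failure of $\psi'$ to be $\alpha$-consistent (for $\alpha\in H$) is encoded by a map of such torsors that does not respect the $A'_k$-structure relative to the refined base $\nss_{k-1}$. Since $H$ is finite and $A'_k$ is a compact Lie group, only finitely much extra data is needed to make the refined map $H$-consistent: one can realize $A_k$ as the quotient of a product of finitely many copies of $A'_k$ (indexed by $H$) by the closed subgroup encoding the $H$-invariant refinement of the fiber equivalence, and this $A_k$ remains Lie by compactness and finiteness of $H$. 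Once $A_k$ and the cocycle are in place, the verification that $\nss$ is a nilspace, that $\psi$ and $p$ are fibrations, and that $\psi$ is $H$-consistent by construction, will follow from the nilspace extension machinery together with Lemma \ref{lem:basic}.
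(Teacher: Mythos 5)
Your inductive frame (induct on the step $k$, descend to $(k-1)$-factors, apply the inductive hypothesis, then lift) matches the paper's, and the base case is fine. But the lifting step, which is where all the difficulty lives, has a genuine gap, and in the form you state it the plan actually fails on the paper's own Example \ref{ex:conscex}. You apply the inductive hypothesis to $\psi'_{k-1}:\ns_{k-1}\to\nss'_{k-1}$ and then propose to fix the top level by enlarging only the structure group $A'_k$ over the base $\nss_{k-1}$ so obtained. In Example \ref{ex:conscex}, however, $\nss'=\mc{D}_2(\mb{Z}_2)$ has $\nss'_1$ a one-point space, so $\psi'_1$ is the trivial map, which is already consistent with everything; the inductive hypothesis may legitimately return $\nss_{k-1}=\mathrm{pt}$. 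A degree-$2$ extension of a point by any abelian $A_2$ is $\mc{D}_2(A_2)$, and a fibration $\ns\to\mc{D}_2(A_2)$ must send each $\pi_1$-fiber $\{a\}\times\mb{Z}_2$ onto all of $\mc{D}_2(A_2)$; no such map can separate $(0,0)$ from $(1,0)$, which is exactly the separation needed for $\alpha$-consistency there. The obstruction to consistency does not live in the top structure group at all: the paper's Lemma \ref{lem:congrefine} shows the correct refinement $\nss=\nss'\times_{\nss'_{k-1}}W$ has the \emph{same} $k$-th structure group as $\nss'$; what must be refined is the $(k-1)$-step base, and it must be refined relative to the \emph{common refinement} $Q'$ of $\psi'$ together with all the maps $\psi'\co\alpha$, $\alpha\in H$ (Lemma \ref{lem:merging}), not relative to $\nss'_{k-1}$. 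That common-refinement step is itself nontrivial: the diagonal $\Delta(\psi',(\psi'\co\alpha)_{\alpha\in H})$ is only a morphism, and turning it into a fibration onto a finite-rank nilspace is the content of Theorem \ref{thm:morfactor}, which your proposal never invokes or replaces.

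Two further points. First, your appeal to ``the nilspace extension machinery'' to build $\nss$ as a cocycle extension leaves unverified precisely the things that need proof: that the proposed quotient of $\ns$ is a compact nilspace, that $\psi$ is a fibration (not merely a morphism), and that the new structure group is Lie. Second, the $H$-consistency of the resulting $\psi$ does not ``follow by construction together with Lemma \ref{lem:basic}'': that lemma only describes consequences of consistency once it is established. The actual verification in the paper requires an argument of the following shape (Lemma \ref{lem:small} plus \cite[Lemma 3.2.37]{Cand:Notes1}): if $\psi(x)=\psi(y)$ then $q'(x)=q'(y+z)$ for some $z$ in the kernel of the $k$-th structure morphism of $\psi'$, whence applying the factor map $m'_\alpha$ of the common refinement gives $\psi'(\alpha(x))=\psi'(\alpha(y+z))=\psi'(\alpha(y))+\phi_k(z)=\psi'(\alpha(y))$. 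Without the common refinement $Q'$ there is no map $m'_\alpha$ to transport the equality through $\alpha$, and this step cannot be carried out.
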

\noindent This theorem is our main result in Section \ref{sec:transconsfact}. It relies on the following more fundamental result concerning morphisms between compact nilspaces, proved in Section \ref{sec:genfactor}.
\begin{restatable}{thmrestate}{morfactor}\label{thm:morfactor}
\textit{Let $\ns,\nss$ be compact nilspaces, with $\nss$ of finite rank, and let $m:\ns\to \nss$ be a morphism. Then there exist a compact finite-rank nilspace $Q$, a fibration $\psi:\ns\to Q$, and a morphism $\psi':Q\to \nss$, such that $m=\psi'\co\psi$.}
\end{restatable}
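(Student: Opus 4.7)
The plan is to argue by induction on the nilspace step $k$ of $\nss$. The case $k=0$ is immediate: $\nss$ is a singleton, so we take $Q=\nss$, with $\psi$ the constant map and $\psi'=\id_{\nss}$; here $\psi$ is a fibration because every corner on $\ns$ admits at least one completion in $\cu^n(\ns)$. For the inductive step, let $\pi=\pi_{k-1,\nss}:\nss\to\nss_{k-1}$ be the canonical projection onto the $(k-1)$-step factor, which is itself a compact finite-rank nilspace. Applying the inductive hypothesis to $\pi\co m:\ns\to\nss_{k-1}$ yields a factorization $\pi\co m=m'\co\psi_0$, where $\psi_0:\ns\to Q_0$ is a fibration, $m':Q_0\to\nss_{k-1}$ is a morphism, and $Q_0$ is a compact finite-rank $(k-1)$-step nilspace.

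Next, I would construct $Q$ by pulling the degree-$k$ abelian extension $\pi:\nss\to\nss_{k-1}$ back along $m'$. Because $\nss$ has finite rank, its $k$th structure group $\ab_k$ is a compact abelian Lie group, and pulling back a cocycle description of $\pi$ to $Q_0$ produces a compact finite-rank $k$-step nilspace $Q$ equipped with a fibration $p:Q\to Q_0$ whose fibres are $\ab_k$-torsors, together with a canonical morphism $\psi':Q\to\nss$ lying over $m'$. The identity $m'\co\psi_0=\pi\co m$ ensures that the pair $(\psi_0,m)$ takes values in $Q\subseteq Q_0\times\nss$, defining a morphism $\psi:\ns\to Q$ which by construction satisfies $\psi'\co\psi=m$.

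The main obstacle is then to verify that $\psi$ is a fibration. Given an $n$-corner $\q'$ on $\ns$ and a completion $\q''\in\cu^n(Q)$ of $\psi\co\q'$, I would first apply the fibration property of $\psi_0$ to lift $p\co\q''$ to some cube $\wt{\q}\in\cu^n(\ns)$ that still completes $\q'$; then $\psi\co\wt{\q}$ and $\q''$ coincide on $\db{n}\setminus\{1^n\}$ and both project to $p\co\q''$ in $Q_0$, so they can differ at $1^n$ only by an element $z\in\ab_k$. The remaining task is to modify $\wt{\q}$ so that this discrepancy vanishes, which amounts to showing that the collection of cubes in $\ns$ completing $\q'$ and lying over $p\co\q''$ in $Q_0$ maps surjectively onto the corresponding $\ab_k$-torsor of completions in $Q$. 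This surjectivity step, which exploits the abelian-extension structure together with a further application of the fibration property of $\psi_0$ on auxiliary corners, is where I expect the technical heart of the argument to lie.
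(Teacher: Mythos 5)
Your induction has a genuine gap, and it sits exactly at the step you defer to the end: the map $\psi=\Delta(\psi_0,m)$ into the pullback $Q=Q_0\times_{\nss_{k-1}}\nss$ need not be a fibration --- indeed it need not even be surjective, which every fibration must be (Remark \ref{rem:fibration}). The fibre of $Q$ over $q\in Q_0$ is the whole $\ab_k(\nss)$-torsor $\pi^{-1}(m'(q))$, whereas $\psi$ only reaches the points $m(x)$ with $\psi_0(x)=q$; since $m$ is merely a morphism, there is no reason for these values to exhaust the torsor. The failure is already visible at $k=1$: there $Q_0$ is a point, $Q=\nss$ and $\psi=m$, so your construction would assert that every morphism into a $1$-step nilspace is a fibration, which is false (take $\ns$ a one-point nilspace and $\nss=\mc{D}_1(\mb{Z}/2\mb{Z})$). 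The natural repair --- replacing $Q$ by the image $\psi(\ns)$ --- does not obviously work either: the image of a morphism of compact nilspaces need not be a sub-nilspace (for instance $x\mapsto x^2$ from $\mc{D}_1(\mb{Z}/4\mb{Z})$ to $\mc{D}_2(\mb{Z}/4\mb{Z})$ has image $\{0,1\}$, which fails corner completion: the $3$-corner equal to $1$ at $(1,0,0)$ and $0$ elsewhere completes uniquely to $3$ at $1^3$), and proving that the corestriction onto the image is a fibration is essentially the original problem over again. So the ``surjectivity onto the $\ab_k$-torsor of completions'' that you flag as the technical heart is not a detail to be filled in; it is false in general.

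For contrast, the paper's proof builds $Q$ out of $\ns$ rather than out of $\nss$. It takes the inverse limit presentation $\ns=\varprojlim \ns_i$ by compact finite-rank nilspaces with fibration limit maps $\psi_i$ (the Camarena--Szegedy inverse limit theorem), proves a rigidity lemma asserting that a morphism from a compact nilspace into the fixed finite-rank nilspace $\nss$ whose image has sufficiently small diameter must be constant, and then uses a compactness argument to find $i_0$ with $\diam\bigl(m(\psi_{i_0}^{-1}(x))\bigr)$ below that threshold for every $x$. Since each fibre $\psi_{i_0}^{-1}(x)$ is a sub-nilspace, $m$ is constant on it, so $m$ factors through $\psi:=\psi_{i_0}$, and one only has to check that the induced map $\psi'$ is a continuous morphism. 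If you want to keep an induction on the step of $\nss$, the right place for it is inside that rigidity lemma (which is proved one structure group at a time), not in a pullback of the extension $\nss\to\nss_{k-1}$.
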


\noindent Recall that a strict inverse system of compact nilspaces $\ns_i$, $i\in \mb{N}$, is a system of fibrations $(\psi_{i,j}:\ns_j\to\ns_i)_{i,j\in \mb{N}, i\leq j}$ such that $\psi_{i,i}=\id$ for all $i\in \mb{N}$ and $\psi_{i,j}\co \psi_{j,k} =\psi_{i,k}$ for all $i\le j\le k$ (see \cite[Definition 2.7.1]{Cand:Notes2}).

In Section \ref{sec:invlimthms}, we apply our results to give a swift proof of the following stronger version of the inverse limit theorem for compact nilspaces. \begin{restatable}{thmrestate}{transinvlim}\label{thm:transinvlim}
\textit{Let $\ns$ be a compact nilspace and let $H$ be a finite subset of $\tran(\ns)$. Then there is a strict inverse system $(\psi_{i,j}:\ns_j\to\ns_i)_{i,j\in \mb{N}, i\leq j}$ of compact finite-rank nilspaces $\ns_i$ such that $\ns=\varprojlim \ns_i$ and such that the limit maps $\psi_i:\ns\to\ns_i$ are all $\langle H\rangle$-consistent.}
\end{restatable}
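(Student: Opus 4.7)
The plan is to inductively refine the strict inverse system provided by the standard nilspace inverse limit theorem \cite[Theorem 2.7.3]{Cand:Notes2}, combining Theorem \ref{thm:trancons} and Theorem \ref{thm:morfactor} at each step to enforce $H$-consistency while preserving compatibility with the previous level. Starting from $\ns=\varprojlim \ns_i'$ via fibrations $\psi_i':\ns\to\ns_i'$ and transition fibrations $\psi_{i,j}':\ns_j'\to\ns_i'$, with each $\ns_i'$ of finite rank, I will construct a new system consisting of finite-rank nilspaces $\ns_i$, $H$-consistent fibrations $\psi_i:\ns\to\ns_i$, auxiliary fibrations $p_i:\ns_i\to\ns_i'$ satisfying $\psi_i'=p_i\co\psi_i$, and transition fibrations $\psi_{i,j}:\ns_j\to\ns_i$ for $i\leq j$ such that $\psi_{i,j}\co\psi_j=\psi_i$.

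For the base case, apply Theorem \ref{thm:trancons} to $\psi_1'$ with $H$ to obtain $\ns_1,\psi_1,p_1$. For the inductive step, given the data at level $j$, consider the joint morphism $m:=(\psi_j,\psi_{j+1}'):\ns\to\ns_j\times\ns_{j+1}'$ into a finite-rank product nilspace. Apply Theorem \ref{thm:morfactor} to write $m=m'\co\phi$, where $\phi:\ns\to Q$ is a fibration to some finite-rank nilspace $Q$ and $m':Q\to\ns_j\times\ns_{j+1}'$ is a morphism. Then apply Theorem \ref{thm:trancons} to the fibration $\phi$ with $H$ to obtain a finite-rank nilspace $\ns_{j+1}$, an $H$-consistent fibration $\psi_{j+1}:\ns\to\ns_{j+1}$, and a fibration $q:\ns_{j+1}\to Q$ with $\phi=q\co\psi_{j+1}$. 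Set $\psi_{j,j+1}:=\pi_1\co m'\co q$ and $p_{j+1}:=\pi_2\co m'\co q$, where $\pi_1,\pi_2$ are the two projections from $\ns_j\times\ns_{j+1}'$; a direct computation gives $\psi_{j,j+1}\co\psi_{j+1}=\psi_j$ and $p_{j+1}\co\psi_{j+1}=\psi_{j+1}'$.

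To see that $\psi_{j,j+1}$ and $p_{j+1}$ are fibrations, I will use the general fact that if $h:X\to Y$ and $g\co h:X\to Z$ are both fibrations then so is $g$. This follows from the standard corner-lifting property of fibrations: given an $n$-corner $\q'$ on $Y$ and a completion of $g\co\q'$ in $Z$, one first lifts $\q'$ to an $n$-corner on $X$ using that $h$ is a fibration (by induction on the dimension of faces), then completes it using the fibration property of $g\co h$, then pushes the resulting cube down by $h$. Applying this with $h=\psi_{j+1}$ yields that both $\psi_{j,j+1}$ and $p_{j+1}$ are fibrations. Setting $\psi_{i,j}:=\psi_{i,i+1}\co\cdots\co\psi_{j-1,j}$ for $i<j$ and $\psi_{i,i}:=\id$ then gives a strict inverse system, with the cocycle relation $\psi_{i,j}\co\psi_{j,k}=\psi_{i,k}$ following from $\psi_{i,j}\co\psi_j=\psi_i$ and surjectivity of $\psi_k$.

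Finally I verify that $\ns=\varprojlim \ns_i$. The canonical map $\Psi:\ns\to\varprojlim \ns_i$, $x\mapsto(\psi_i(x))_i$, is continuous. Injectivity is immediate from $\psi_i'=p_i\co\psi_i$ and the fact that the $\psi_i'$ already separate points in $\ns$. For surjectivity, given $(x_i)\in\varprojlim \ns_i$, the preimages $\psi_i^{-1}(x_i)\subset\ns$ form a decreasing chain of non-empty closed subsets of the compact space $\ns$, hence their intersection is non-empty by the finite intersection property, producing the required preimage. Thus $\Psi$ is a continuous bijection between compact Hausdorff spaces and therefore a homeomorphism. Each $\psi_i$ is $H$-consistent by construction, and Lemma \ref{lem:basic} then upgrades this to $\langle H\rangle$-consistency. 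I expect the main technical obstacle to be the corner-lifting step that underlies the fact that $\psi_{j,j+1}$ and $p_{j+1}$ are fibrations, though this relies on a standard property of fibrations in nilspace theory.
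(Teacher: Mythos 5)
Your proposal is correct and follows essentially the same route as the paper: start from the standard inverse limit $\ns=\varprojlim\ns_i'$, and at each step merge the previous $H$-consistent fibration with the next limit map and then apply Theorem \ref{thm:trancons}. Your use of the diagonal product plus Theorem \ref{thm:morfactor} in the inductive step is just an inlined proof of the paper's Lemma \ref{lem:merging} (the paper verifies that the induced maps out of the refinement are fibrations via the fiber-image characterization rather than your corner-lifting argument, but both are standard and valid), and your more detailed verification of $\ns=\varprojlim\ns_i$ fills in what the paper leaves implicit.
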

\noindent As a special case we obtain that for every \emph{ergodic} nilspace system, if its corresponding group $H$ is a \emph{finitely-generated} subgroup of $\tran(\ns)$, then the nilspace system is an inverse limit of nilsystems; see Theorem \ref{thm:nilsysinvlim}. Thus we also provide a different proof of a result from \cite{GMV3}; see Remark \ref{rem:GMVrels}. Theorem \ref{thm:nilsysinvlim} is used in \cite{CScouplings} to extend the structure theorem of Host and Kra to finitely generated nilpotent group actions; see \cite[Theorem 5.12]{CScouplings}.

\section{Some motivating examples}\label{sec:exas}
\noindent We begin with a simple example showing that a fibration on a compact nilspace $\ns$ need not be $\alpha$-consistent for every $\alpha\in \tran(\ns)$.
\begin{example}\label{ex:conscex}
Recall the definition of the degree-$k$ nilspace structure $\mc{D}_k(\ab)$ on an abelian group $\ab$, in terms of the Gray-code alternating sum $\sigma_k$;  see \cite[Definition 2.2.30]{Cand:Notes1}. Let $\ns$ be the product nilspace $\mc{D}_1(\mb{Z}_2)\times \mc{D}_2(\mb{Z}_2)$ (where by $\mb{Z}_2$ we denote the 2-element group $\mb{Z}/2\mb{Z}$), and let $\nss$ be the nilspace $\mc{D}_2(\mb{Z}_2)$. Thus $\ns,\nss$ are 2-step compact finite-rank nilspaces (actually finite and with the discrete topology). Let $\psi$ denote the 2-nd coordinate projection $\ns\to\nss$, $(a,b)\mapsto b$. Using the third sentence in Definition \ref{def:fsmorph}, it is easily checked that $\psi$ is a fibration.

Let $\alpha:\ns\to\ns$ be the map $(a,b)\mapsto (a+1,b+a)$. We claim that $\alpha\in \tran(\ns)$. To see this, by \cite[Definition 3.2.27 and Lemma 3.2.13]{Cand:Notes1} it suffices to check that for every 3-cube $\q\in \cu^3(\ns)$, and any 2-face $F\subset \db{3}$, defining $\alpha^F(\q):\db{3}\to\ns$ by $\alpha^F(\q)(v)=\alpha(\q(v))$ for $v\in F$ and $\q(v)$ otherwise, we have $\alpha^F(\q)\in\cu^3(\ns)$. Let $\q':=\alpha^F(\q)$. By definition of the product nilspace structure, we have $\q'\in \cu^3(\ns)$ if and only if the coordinate projections $p_1:\ns\to \mc{D}_1(\mb{Z}_2)$ and $p_2:\ns\to \mc{D}_2(\mb{Z}_2)$ satisfy $p_i\co \q'\in \cu^3(\mc{D}_i(\mb{Z}_2))$ for $i=1,2$. By definition of $\mc{D}_1(\mb{Z}_2)$ and $\mc{D}_2(\mb{Z}_2)$, we therefore have $\q'\in \cu^3(\ns)$ if and only if the Gray-code alternating sum $\sigma_3(p_2\co \q')$ is 0 and for every 2-face map $\phi:\db{2}\to \db{3}$ we have $\sigma_2(p_1\co \q'\co \phi)=0$. Now from the definition of $\alpha$, we deduce that $\q'=\q+\q''$ where $\q''(v)=(1,p_1\co\q(v))$ if $v\in F$ and $\q''(v)=(0,0)$ otherwise. We then have $\sigma_3(p_2\co \q')=\sigma_3(p_2\co \q)+\sigma_3(p_2\co \q'')=0$, and also $\sigma_2(p_1\co \q'\co \phi)=\sigma_2(p_1\co \q\co \phi)+\sigma_2(p_1\co \q''\co \phi)=0$. This proves our claim. Note also that $\alpha$ is a minimal map.

Now observe that $\psi$ is not $\alpha$-consistent. Indeed, for example $(1,0),(0,0)\in \ns$ satisfy $\psi(1,0)=\psi(0,0)=0$, but $\psi\co \alpha(1,0)=1\neq 0=\psi \co \alpha(0,0)$.
\end{example}

\smallskip

\noindent One may try to avoid such examples by assuming that the fibration $\psi$ has additional properties. For instance, noting that if $\psi$ is injective then trivially it is $\alpha$-consistent, one may hope that if $\psi$ is ``sufficiently close" to being injective then it should also be $\alpha$-consistent. A way to capture closeness to injectivity could be to assume that every preimage $\psi^{-1}(y)$, $y\in \nss$ is a set of diameter\footnote{For a metric space $(\ns,d)$ and $B\subset \ns$, we define the \emph{diameter} of $B$ by $\diam(B):=\sup_{x,y\in B} d(x,y)$.} at most some small fixed $\delta>0$, for some fixed metric on $\ns$. However, one can elaborate on Example \ref{ex:conscex} to produce translations $\alpha$ such that even morphisms arbitrarily close to being injective in this sense can fail to be $\alpha$-consistent. Let us outline such a construction.

\begin{example}\label{ex:conscex2}
Let $\ns_0$ be the nilspace $\mc{D}_1(\mb{Z}_2)\times \mc{D}_2(\mb{Z}_2)$ from Example \ref{ex:conscex}. Let $\ns$ be the compact nilspace consisting of the power $\ns_0^{\mb{N}}$ with the product compact-nilspace structure. Let $\alpha_0\in\tran(\ns_0)$ be the translation $(a,b)\mapsto (a+1,b+a)$ from Example \ref{ex:conscex}. Let $\alpha$ denote the translation on $\ns$ defined by applying $\alpha_0$ to each coordinate of $x=(x_i)_{i\in \mb{N}}\in \ns$, i.e. $\alpha(x)=(\alpha_0(x_i))_{i\in \mb{N}}$. We can metrize $\ns$ with $d(x,y)=\sum_{i\in \mb{N}} 2^{-i}\,d_0(x_i,y_i)$ for $d_0$ the discrete metric on $\ns_0$. Consider now the following sequence of fibrations: for each $i\in \mb{N}$ let $\nss_i$ denote the product nilspace $\ns_0^i \times \mc{D}_2(\mb{Z}_2)$, and let $\psi_i: \ns \to \nss_i$, $x\mapsto (x_1,\ldots,x_i,\psi(x_{i+1}))$, where $\psi$ is the projection to the second coordinate on $\ns_0$ as in Example \ref{ex:conscex}. We then have the following facts (we leave the proofs to the reader):
\begin{enumerate}
\item For every $i\in\mb{N}$ the map $\psi_i$ is a (continuous) fibration.
\item We have $\sup_{y\in \nss_i} \diam(\psi_i^{-1}(y))=2^{-i}\to 0$ as $i\to \infty$. And yet,
\item For every $i$, letting $\mf{0}$ be the element of $\ns$ with all components equal to $(0,0)$, and $x$ the element with $x_j=(0,0)$ for $j\neq i+1$ and $x_{i+1}=(1,0)$, we have $\psi_i(x)=\psi_i(\mf{0})$ and $\psi_i\co\alpha(x)\neq \psi_i\co\alpha(\mf{0})$, so $\psi_i$ is not $\alpha$-consistent.
\end{enumerate}
\end{example}
\noindent Thus, Example \ref{ex:conscex2} shows that for $k>1$ there can be a translation on a $k$-step nilspace $\ns$ and fibrations $\psi:\ns\to\nss$ that are arbitrarily close to being injective  and yet still fail to be $\alpha$-consistent. As we explain in the sequel, if we are willing to \emph{refine} a fibration $\psi:\ns\to\nss$ by considering how $\psi$ factors through nilspaces finer than $\nss$, then the $\alpha$-consistency can be ensured for some such factor.

\section{Finite-rank valued morphisms factor through fibrations}\label{sec:genfactor}

\noindent In this section we prove Theorem \ref{thm:morfactor}, which we recall here for convenience. This is used in Section \ref{sec:transconsfact} to show that a fibration into a finite-rank nilspace always factors through some fibration consistent with a prescribed translation (Theorem \ref{thm:trancons}).
\morfactor*
\noindent Our proof of this theorem can be summarized simply as follows: we take the inverse limit expression $\ns=\varprojlim \ns_i$ given by \cite[Theorem 4]{CamSzeg} (see also \cite[Theorem 2.7.3]{Cand:Notes2}) and we show that, for some $i$ sufficiently large, the map $m$ factors through the limit map $\psi_i:\ns\to\ns_i$, so that we can set $\psi=\psi_i$. The proof uses some lemmas which we detail as follows.

Firstly, we have the following topological lemma.
\begin{lemma}\label{lem:topo}
Let $T, T_1, T_2,\ldots$ be compact metric spaces, and let $(\psi_i:T\to T_i)_{i\in \mb{N}}$ be a sequence of surjective continuous maps with the following properties:
\begin{enumerate}
\item For all $i\leq j$ and $x,y\in T$ with $\psi_j(x)=\psi_j(y)$, we have $\psi_i(x)=\psi_i(y)$.
\item For every $x\neq y$ there exists $i$ such that $\psi_i(x)\neq \psi_i(y)$.
\end{enumerate}
Let $(M,d)$ be a metric space and let $f:T\to M$ be continuous. Then for every $\epsilon>0$ there exists $i$ such that for every $x\in T_i$ we have $\diam\big(f(\psi_i^{-1}(x))\big)\leq \epsilon$.
\end{lemma}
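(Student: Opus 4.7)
My plan is to prove the lemma by a short compactness argument in $T \times T$. First I would recast the hypotheses geometrically: set $R_i = \{(x,y)\in T\times T : \psi_i(x)=\psi_i(y)\}$. Each $R_i$ is closed, being the preimage of the diagonal of the Hausdorff space $T_i$ under the continuous map $(\psi_i,\psi_i):T\times T\to T_i\times T_i$. Property (i) makes $(R_i)_{i\in\mb{N}}$ nested decreasing, i.e.\ $R_j\subset R_i$ whenever $i\leq j$. Property (ii) says precisely that $\bigcap_{i\in\mb{N}} R_i = \Delta_T$, the diagonal of $T$.

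I would then argue by contradiction. Suppose the conclusion fails for some $\epsilon>0$; then for every $i\in\mb{N}$ there is some $x\in T_i$ with $\diam(f(\psi_i^{-1}(x)))>\epsilon$, so one can pick $x_i,y_i\in T$ with $\psi_i(x_i)=\psi_i(y_i)$ and $d_M(f(x_i),f(y_i))\geq\epsilon/2$. Since $T\times T$ is compact metric, I pass to a (strictly increasing) subsequence along which $(x_i,y_i)\to(x,y)$. Continuity of $f$ gives $d_M(f(x),f(y))\geq\epsilon/2>0$, so $x\neq y$. On the other hand, fix any $j\in\mb{N}$: for all sufficiently large $i$ in the subsequence we have $i\geq j$, hence $(x_i,y_i)\in R_i\subset R_j$ by the nestedness from (i); since $R_j$ is closed, the limit satisfies $(x,y)\in R_j$. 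As this holds for every $j$, we conclude $(x,y)\in\bigcap_j R_j = \Delta_T$, i.e.\ $x=y$, contradicting $x\neq y$.

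There is no substantial obstacle here: the argument is a textbook compactness/nestedness combination, and the nilspace structure is not needed at all — only the three abstract hypotheses on $(T,T_i,\psi_i)$. The only point requiring mild care is ensuring that, after extracting a single convergent subsequence, one can still compare its tail with each fixed index $j$; this is automatic because the original index set is $\mb{N}$ and the $R_j$ are nested, so the tail is eventually contained in any prescribed $R_j$.
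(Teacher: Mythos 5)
Your proof is correct and is essentially the same compactness-and-contradiction argument as the paper's: extract a convergent subsequence of pairs identified by $\psi_i$ but separated by $f$, use continuity of $f$ to get $x\neq y$, and use nestedness plus closedness of the relations (i.e.\ continuity of the $\psi_j$) to force $x=y$. Recasting the hypotheses via the closed nested sets $R_i$ with $\bigcap_i R_i=\Delta_T$ is only a cosmetic repackaging of the paper's use of properties (i) and (ii).
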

\noindent The assumptions $(i)$, $(ii)$ in this lemma are satisfied in particular when $T$ is the topological inverse limit of the spaces $T_i$.
\begin{proof}
Suppose for a contradiction that for some $\epsilon>0$, for all $i\in \mb{N}$ there exist $x_i, y_i \in T$ such that $\psi_i(x_i)=\psi_i(y_i)$ and $d(f(x_i),f(y_i))\ge \epsilon$. Since $T$ is compact, we can assume (passing to subsequences if necessary) that there exist $x,y\in T$ with $x_i\to x$ and $y_i\to y$ as $i\to\infty$. By continuity of $f$ and $d$  we have $d(f(x),f(y))\ge \epsilon$, so in particular $x\neq y$.  By $(ii)$, this implies that $\psi_k(x)\neq \psi_k(y)$ for some $k$.  However, by assumption $\psi_j(x_j)=\psi_j(y_j)$ for every $j$, and if $j>k$ then by $(i)$ we therefore have $\psi_k(x_j)=\psi_k(y_j)$. Letting $j\to\infty$, by continuity of $\psi_k$ we deduce that $\psi_k(x)=\psi_k(y)$, a contradiction.
\end{proof}
Secondly, we have the following algebraic result, which is a basic fact about nilspaces.%
\begin{lemma}\label{lem:sub-nilspace}
Let $\ns,\nss$ be $k$-step nilspaces, and let $\psi:\ns\to\nss$ be a fibration. Then for every $y\in \nss$ the preimage $\psi^{-1}(y)$ is a sub-nilspace of $\ns$.
\end{lemma}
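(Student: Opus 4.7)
The plan is to take $S := \psi^{-1}(y)$ equipped with the inherited cube structure $\cu^n(S) := \{\q\in \cu^n(\ns) : \q(\db{n})\subset S\}$, and verify directly that $(S,\cu^\bullet(S))$ satisfies the three nilspace axioms as well as the $k$-step uniqueness condition. The heart of the argument will be the corner-completion axiom, and everything else will transfer from $\ns$ essentially for free.

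The composition and ergodicity axioms for $S$ come with no work beyond unwinding definitions: if $\q\in\cu^n(S)$ and $\phi:\db{m}\to\db{n}$ is a discrete-cube morphism, then $\q\co\phi\in\cu^m(\ns)$ has image inside $\q(\db{n})\subset S$, so $\q\co\phi\in\cu^m(S)$; and any constant map to a point of $S$ is a cube on $\ns$ with image in $S$. The $k$-step uniqueness condition on $S$ follows in the same trivial way, since two completions of an $n$-corner in $S$ with $n>k$ are in particular two completions in $\ns$, which coincide because $\ns$ is $k$-step.

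The only nontrivial step, and the main point of the lemma, is the corner-completion axiom for $S$, and this is where the fibration hypothesis is used. Given an $n$-corner $\q'\in\cor^n(S)$, I would regard it as an $n$-corner on $\ns$ whose image lies in $\psi^{-1}(y)$. Then $\psi\co\q'$ is the constant-$y$ corner on $\nss$, and by ergodicity on $\nss$ it is completed by the constant cube $\q''\equiv y$ in $\cu^n(\nss)$. Applying the second, equivalent characterization of a fibration in Definition \ref{def:fsmorph} to $\q'$ and $\q''$, I obtain a cube $\q\in\cu^n(\ns)$ completing $\q'$ with $\psi\co\q=\q''$; since $\psi\co\q\equiv y$, the image $\q(\db{n})$ sits inside $\psi^{-1}(y)=S$, so $\q\in\cu^n(S)$ is the desired completion of $\q'$ inside $S$.

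I do not anticipate any real obstacle: the corner-lifting formulation of a fibration is tailor-made for this kind of statement, because pulling back the trivial (constant) completion on $\nss$ through $\psi$ is exactly the mechanism that forces the lifted completion to stay inside a single fiber. In this sense the lemma is a direct diagonal consequence of the very definition of a fibration, once one has correctly set up the inherited cube structure on $S$.
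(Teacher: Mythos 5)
Your proof is correct and takes essentially the same approach as the paper's: composition and ergodicity transfer trivially to the fiber, and corner-completion is obtained by lifting the constant-$y$ completion of $\psi\co\q'$ through the corner-lifting characterization of a fibration in Definition \ref{def:fsmorph}. (The only cosmetic quibble is that the constant map $\db{n}\to\{y\}$ being a cube is a consequence of the composition axiom applied to a point of $\cu^0(\nss)$ rather than of ergodicity per se, but this changes nothing.)
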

\begin{proof}
Of the three nilspace axioms (see \cite[Definition 1.2.1]{Cand:Notes1}), the composition and ergodicity axioms are clearly satisfied. The corner-completion axiom follows readily from the third sentence in Definition \ref{def:fsmorph} (using that for all $n$ the constant map $\db{n}\to \{y\}$ is a cube).\end{proof}
\noindent We now move on to the main element in the proof of Theorem \ref{thm:morfactor}, which is a lemma that extends the following result from \cite{CamSzeg} (see also \cite[Corollary 2.9.8]{Cand:Notes2}).
\begin{lemma}[\textrm{\cite[Corollary 3.2]{CamSzeg}}]\label{lem:restricted-rigidity}
For every compact finite-rank abelian group $\ab'$ and $j\in \mb{N}$, there exists $\epsilon>0$ such that the following holds. For every compact $k$-step nilspace $\ns$ and continuous morphism $m:\ns\to\mc{D}_j(\ab')$, if $\diam(m(\ns))\leq \epsilon$ then $m$ is constant.
\end{lemma}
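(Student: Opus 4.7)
The plan is to reduce the lemma, via a lifting argument, to the following rigidity statement (with no smallness hypothesis): every continuous morphism $\tilde m:\ns\to\mc{D}_j(\mb{R}^d)$ from a compact nilspace must be constant. Since $\ab'$ is a compact finite-rank abelian group, it is a compact abelian Lie group, so $\ab'\cong\mb{T}^d\times F$ for some $d\geq 0$ and some finite abelian group $F$. Fix a translation-invariant metric on $\ab'$ and choose $\epsilon>0$ small enough that (i) every subset of $\ab'$ of diameter at most $\epsilon$ lies in a single coset of the identity component $\mb{T}^d\times\{0\}$, and (ii) $2^{j+1}\epsilon$ is strictly smaller than the injectivity radius of the exponential map $\mb{R}^d\to\mb{T}^d$. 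Given $m:\ns\to\mc{D}_j(\ab')$ with $\diam(m(\ns))\leq\epsilon$, condition (i) together with a translation by a constant of $\ab'$ (which preserves morphism-ness and constancy) lets me assume $m(\ns)$ lies in a small ball about $0\in\mb{T}^d\times\{0\}$, so that $m$ becomes a morphism into $\mc{D}_j(\mb{T}^d)$.

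Because this ball is contractible and the exponential map is a local homeomorphism on it, $m$ lifts continuously to $\tilde m:\ns\to\mb{R}^d$ with image in a ball of radius $\epsilon$ about the origin. I verify that $\tilde m$ is a morphism into $\mc{D}_j(\mb{R}^d)$: for every $c\in\cu^n(\ns)$ and every $(j+1)$-face $F\subset\db{n}$, the alternating sum $\sigma_{j+1}((m\co c)|_F)$ vanishes in $\mb{T}^d$, so its lift $\sigma_{j+1}((\tilde m\co c)|_F)$ lies in $\mb{Z}^d\subset\mb{R}^d$; but this lift is a signed sum of $2^{j+1}$ vectors each of norm less than $\epsilon$, so has norm less than $2^{j+1}\epsilon$, which by (ii) is strictly below the injectivity radius and hence forces the lift to equal $0$. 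Thus $\tilde m\co c\in\cu^n(\mc{D}_j(\mb{R}^d))$ for every cube $c$, so $\tilde m$ is a morphism.

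It remains to prove the rigidity statement. Post-composing with coordinate projections reduces it to the case $d=1$. I induct on the step $k$ of the compact nilspace $\ns$: the base case $k=0$ is trivial. For $k\geq 1$, each fiber of the factor map $\pi_{k-1}:\ns\to\ns_{k-1}$ is, after fixing a base point, isomorphic as a nilspace to $\mc{D}_k(\ab_k)$, so the restriction of $\tilde m$ to such a fiber yields a continuous morphism $\phi:\mc{D}_k(\ab_k)\to\mc{D}_j(\mb{R})$. Testing $\phi$ against the affine cubes $(v_1,\ldots,v_n)\mapsto a_0+\sum_{i=1}^{n} v_i a_i$ in $\mc{D}_k(\ab_k)$ shows $\phi$ must satisfy $\Delta_{a_1}\cdots\Delta_{a_{j+1}}\phi\equiv 0$ on $\ab_k$. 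A Fourier-analytic argument on the compact abelian group $\ab_k$ then forces $\phi$ to be constant: expanding $\phi=\sum_{\chi}\hat\phi(\chi)\chi$ gives $\hat\phi(\chi)\prod_{i=1}^{j+1}(\chi(a_i)-1)=0$ for all tuples $(a_1,\ldots,a_{j+1})$, and setting $a_1=\cdots=a_{j+1}=a$ yields $\hat\phi(\chi)(\chi(a)-1)^{j+1}=0$, so $\hat\phi(\chi)=0$ whenever $\chi$ is nontrivial. Hence $\tilde m$ is constant on each fiber of $\pi_{k-1}$, factors through the $(k-1)$-step nilspace $\ns_{k-1}$, and the induction hypothesis completes the argument.

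The main obstacle is this rigidity claim: one must correctly identify the fiber cube structure with $\mc{D}_k(\ab_k)$, translate the morphism property of $\phi$ into the polynomial identity $\Delta^{j+1}\phi\equiv 0$ via affine cubes, and exploit the compactness of $\ab_k$ using Fourier analysis to force constancy. Since the lemma allows $\ns$ to be an arbitrary compact $k$-step nilspace (no finite-rank hypothesis on $\ns$ itself), the structure groups $\ab_k$ may be arbitrary compact abelian groups, so it is essential that the Fourier argument works in this generality — which it does cleanly as above.
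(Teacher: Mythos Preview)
The paper does not give a proof of this lemma: it is quoted as a known result from \cite[Corollary 3.2]{CamSzeg} (see also \cite[Corollary 2.9.8]{Cand:Notes2}), and is then used as a black box in the proof of Lemma~\ref{lem:rigidity-of-morphisms}. So there is no argument in the present paper to compare your proposal against.

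That said, your self-contained proof is essentially correct and follows the natural route (indeed the one underlying the cited references): reduce to the torus component, lift through the covering $\mb{R}^d\to\mb{T}^d$ using smallness of the image, verify the lift is still a morphism via the bound on the alternating sums, and then prove that any continuous morphism from a compact nilspace into $\mc{D}_j(\mb{R})$ is constant. A few small points are worth tightening. First, what you need in condition~(ii) is that $2^{j+1}\epsilon$ be smaller than the length of the shortest nonzero vector in the kernel lattice, not the injectivity radius; since your stated condition is stronger, the argument still goes through. Second, in the Fourier step you should say explicitly that $\phi$, being continuous on a compact group, lies in $L^2(\ab_k)$, so the vanishing of all nontrivial Fourier coefficients gives constancy almost everywhere and hence (by continuity) everywhere. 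Third, when you pass from ``$\tilde m$ is constant on each fiber of $\pi_{k-1}$'' to ``$\tilde m$ factors through $\ns_{k-1}$ as a continuous morphism'', you are implicitly using that $\pi_{k-1}$ is a closed surjection (for continuity of the factor) and a fibration (so that every cube in $\ns_{k-1}$ lifts to a cube in $\ns$, giving the morphism property); both are standard but should be mentioned. With these clarifications the argument is complete.
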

The extension in question is the following.
\begin{lemma}\label{lem:rigidity-of-morphisms}
For every compact finite-rank $k$-step nilspace $\nss$, there exists $\delta>0$ such that the following holds. For every compact $k$-step nilspace $\ns$ and continuous morphism $m:\ns\to\nss$, if $\diam(m(\ns))\leq \delta$ then $m$ is constant.
\end{lemma}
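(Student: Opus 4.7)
The plan is to proceed by induction on the step $k$. The base case $k=0$ is trivial since a $0$-step compact nilspace is a point, so every morphism into it is constant. For the inductive step, I would exploit the structure of a $k$-step nilspace as an extension of its $(k-1)$-characteristic factor $\nss_{k-1}=\mc{F}_{k-1}(\nss)$ by the top structure group $\ab_k$, which is a compact abelian Lie group because $\nss$ has finite rank. The aim is to first force the composition $\pi_{k-1,\nss}\co m:\ns\to\nss_{k-1}$ to be constant via the inductive hypothesis applied to $\nss_{k-1}$, thereby confining $m(\ns)$ to a single $\pi_{k-1,\nss}$-fiber; then, on that fiber, identified as a sub-nilspace with $\mc{D}_k(\ab_k)$, to apply Lemma \ref{lem:restricted-rigidity} to conclude.

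To make this quantitative, fix compatible metrics on $\nss$, $\nss_{k-1}$ and $\ab_k$. Let $\delta_0$ be the constant provided by the inductive hypothesis applied to $\nss_{k-1}$, and let $\epsilon$ be the constant from Lemma \ref{lem:restricted-rigidity} applied to $\ab_k$ with $j=k$. By uniform continuity of $\pi_{k-1,\nss}$ on the compact space $\nss$, some $\delta_1>0$ satisfies $d_\nss(u,v)\le \delta_1 \Rightarrow d_{\nss_{k-1}}(\pi_{k-1,\nss}(u),\pi_{k-1,\nss}(v))\le \delta_0$. Next, consider the map $\phi$ sending a pair $(u,v)\in\nss^2$ with $\pi_{k-1,\nss}(u)=\pi_{k-1,\nss}(v)$ to the unique $a\in\ab_k$ satisfying $a\cdot u=v$ under the vertical $\ab_k$-action on $\nss$. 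Its domain is compact, $\phi$ is continuous and vanishes on the diagonal, so by uniform continuity there is $\delta_2>0$ such that $d_\nss(u,v)\le \delta_2$ together with $\pi_{k-1,\nss}(u)=\pi_{k-1,\nss}(v)$ imply $d_{\ab_k}(\phi(u,v),0)\le \epsilon$. Set $\delta:=\min(\delta_1,\delta_2)$. If $\diam(m(\ns))\le \delta$, then $\pi_{k-1,\nss}\co m$ has image of diameter at most $\delta_0$, which the inductive hypothesis forces to be constant, with value some $y_0$; hence $m(\ns)\subset \pi_{k-1,\nss}^{-1}(y_0)$. Fixing $x_0\in m(\ns)$ and setting $\tilde m(x):=\phi(x_0,m(x))$ yields a morphism $\tilde m:\ns\to\mc{D}_k(\ab_k)$, obtained as the composition of $m$ with the inverse of the nilspace isomorphism $a\mapsto a\cdot x_0$ from $\mc{D}_k(\ab_k)$ onto $\pi_{k-1,\nss}^{-1}(y_0)$. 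By the choice of $\delta_2$, the image of $\tilde m$ has $\ab_k$-diameter at most $\epsilon$, so Lemma \ref{lem:restricted-rigidity} renders $\tilde m$, and hence $m$, constant.

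The main obstacle I anticipate is not the outer induction but the verification of the two underlying structural facts I use: that each $\pi_{k-1,\nss}$-fiber is a sub-nilspace isomorphic to $\mc{D}_k(\ab_k)$, and that the associated fiber-difference map $\phi$ is globally continuous on the compact set of same-fiber pairs in $\nss^2$. Both rest on the fact that the top structure group $\ab_k$ acts freely and transitively on every $\pi_{k-1,\nss}$-fiber by translations from $\tran(\nss)$, a standard feature of the structure theory of $k$-step compact nilspaces (see \cite[Section 3.3]{Cand:Notes1}). Once these are in hand, the uniformity in $y_0$ of the reduction to $\mc{D}_k(\ab_k)$ is automatic and the remaining steps are formal. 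A minor bookkeeping point is that the inductive hypothesis is about $(k-1)$-step sources, whereas our source $\ns$ is $k$-step; this is resolved routinely by factoring $\pi_{k-1,\nss}\co m$ through $\ns_{k-1}$ using the universal property of the $(k-1)$-characteristic factor, before appealing to the hypothesis.
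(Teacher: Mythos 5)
Your proposal is correct and is essentially the paper's own argument: both reduce, level by level, to Lemma \ref{lem:restricted-rigidity} by identifying each $\pi_{k-1}$-fiber with $\mc{D}_k(\ab_k)$ and using compactness to get uniform smallness of the fiber-difference map near the diagonal (the paper invokes \cite[Proposition A.1]{GMV2} for exactly this, packaged as Proposition \ref{prop:rigidity}). The only difference is organizational: you induct on the step $k$ of the target and apply the lemma's own inductive hypothesis to $\nss_{k-1}$, whereas the paper fixes $\nss$ and inducts on $i$ showing $\pi_i\co m$ is constant; your handling of the source-step mismatch by factoring through $\ns_{k-1}$ is the right fix.
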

\noindent For $i\le j$, we denote by $\pi_{i,j}:\nss_j\to \nss_i$ the projection between the two factors of $\nss$ (thus $\pi_i=\pi_{i,k}$ if $\nss$ is $k$-step). Our proof of Lemma \ref{lem:rigidity-of-morphisms} uses the following result.

\begin{proposition}\label{prop:rigidity}
Let $\nss$ be a compact finite-rank $k$-step nilspace, with a fixed metric $d_{\nss}$, and for each $i\in [k]$ let $\ab_i$ be the $i$-th structure group of $\nss$, with a fixed metric $d_{\ab_i}$. Then there is a collection of compact nilspace isomorphisms $\psi_{i,y}:\pi_{i-1,i}^{-1}(y) \to \mc{D}_i(\ab_i)$ for $i\in [k]$, $y\in \nss_{i-1}$, such that the following holds: for every $\epsilon>0$ there exists $\delta>0$ such that for all $a,b\in Y$ with $\pi_{i-1}(a)=\pi_{i-1}(b)=y$, if $d_{\nss}(a,b)< \delta$ then $d_{\ab_i}\big(\psi_{i,y}\co\pi_i(a),\psi_{i,y}\co\pi_i(b)\big)< \epsilon$.
\end{proposition}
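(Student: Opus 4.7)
The plan is to exploit the principal $\ab_i$-bundle structure of each factor projection $\pi_{i-1,i}:\nss_i\to\nss_{i-1}$. Indeed, by the standard theory of compact $k$-step nilspaces, $\ab_i$ acts continuously, freely and transitively by translations on each fiber $\pi_{i-1,i}^{-1}(y)$, and this fiber, equipped with the sub-nilspace structure inherited from $\nss_i$, is nilspace-isomorphic to $\mc{D}_i(\ab_i)$; finite rank ensures that each $\ab_i$ is a compact abelian Lie group. For each $i\in[k]$ and each $y\in\nss_{i-1}$ I would simply pick an arbitrary nilspace isomorphism $\psi_{i,y}:\pi_{i-1,i}^{-1}(y)\to\mc{D}_i(\ab_i)$ (equivalently, a base point in each fiber). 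Crucially, no measurable or continuous dependence of the family $(\psi_{i,y})_y$ on $y$ is required for the conclusion.

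The heart of the argument is that the difference $\psi_{i,y}\co\pi_i(a)-\psi_{i,y}\co\pi_i(b)$, for $a,b$ lying above a common $y\in\nss_{i-1}$, is independent of the choice of $\psi_{i,y}$: any two such isomorphisms differ by a translation by an element of $\ab_i$, which cancels in the subtraction. This lets me introduce a \emph{difference map} on the compact set $Z_i:=\{(a',b')\in\nss_i\times\nss_i:\pi_{i-1,i}(a')=\pi_{i-1,i}(b')\}$, namely $\Delta_i:Z_i\to\ab_i$ sending $(a',b')$ to the unique $t\in\ab_i$ with $a'=t\cdot b'$. By construction, $\Delta_i$ vanishes on the diagonal, and $\psi_{i,y}\co\pi_i(a)-\psi_{i,y}\co\pi_i(b)=\Delta_i(\pi_i(a),\pi_i(b))$ whenever $\pi_{i-1}(a)=\pi_{i-1}(b)=y$. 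Thus the entire uniform-continuity statement of the proposition is recast as a statement purely about $\Delta_i$, for which the choice of the $\psi_{i,y}$ becomes irrelevant.

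I would then verify that $\Delta_i$ is continuous by the standard principal-bundle trick: the map $F_i:\ab_i\times\nss_i\to Z_i$ defined by $(t,b')\mapsto(t\cdot b',b')$ is continuous by continuity of the $\ab_i$-action, bijective by freeness and transitivity on fibers, and goes between compact Hausdorff spaces, hence is a homeomorphism. Since $\Delta_i$ is the first-coordinate projection of $F_i^{-1}$, it is continuous, and compactness of $Z_i$ immediately upgrades this to uniform continuity.

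Finally, given $\epsilon>0$, uniform continuity of $\Delta_i$ together with $\Delta_i(c,c)=0$ supplies $\eta_i>0$ such that $(a',b')\in Z_i$ with $d_{\nss_i}(a',b')<\eta_i$ forces $d_{\ab_i}(\Delta_i(a',b'),0)<\epsilon$. Uniform continuity of $\pi_i:\nss\to\nss_i$ then yields $\delta_i>0$ with $d_\nss(a,b)<\delta_i\Rightarrow d_{\nss_i}(\pi_i(a),\pi_i(b))<\eta_i$, and since $[k]$ is finite one takes $\delta:=\min_{i\in[k]}\delta_i$. The only mildly subtle step is the continuity of $\Delta_i$, where the compact-Hausdorff bijection argument above is the cleanest route; the rest is routine bookkeeping with the principal-bundle picture.
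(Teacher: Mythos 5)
Your argument is correct, and its overall skeleton matches the paper's: choose a base point $y'$ in each fiber to define $\psi_{i,y}(y'+z)=z$ (so that the quantity to be controlled depends only on the ``difference'' of $\pi_i(a)$ and $\pi_i(b)$ in $\ab_i$), handle each level $i$ inside $\nss_i$, pull the estimate back to $\nss$ via uniform continuity of $\pi_i$, and take the minimum of the finitely many $\delta_i$. The genuine difference is how the key $\epsilon$--$\delta$ statement about the structure-group action is obtained. The paper simply cites \cite[Proposition A.1]{GMV2} (if $d_{\nss_i}(x,x+z)$ is small then $z$ is close to $0$), whereas you derive it from scratch: $(t,b')\mapsto(t\cdot b',b')$ is a continuous bijection from the compact space $\ab_i\times\nss_i$ onto the closed (hence compact) set $Z_i$, therefore a homeomorphism, so the difference map $\Delta_i$ is uniformly continuous and vanishes on the diagonal. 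This is a clean, self-contained replacement for the cited lemma, using only the continuity of the $\ab_i$-action and its free transitivity on $\pi_{i-1,i}$-fibers, both standard facts for compact nilspaces. Two small points to tighten: first, your assertion that ``any two such isomorphisms differ by a translation'' holds for the base-point isomorphisms $y'+z\mapsto z$ but not for arbitrary nilspace isomorphisms onto $\mc{D}_i(\ab_i)$ (these may also twist by a group automorphism); since the proposition only asks you to exhibit \emph{one} suitable collection, fixing the base-point choice, as your parenthetical does, is what actually makes the difference well defined. Second, passing from $d_{\ab_i}\big(\Delta_i(a',b'),0\big)<\epsilon$ back to $d_{\ab_i}\big(\psi_{i,y}(a'),\psi_{i,y}(b')\big)<\epsilon$ uses translation-invariance of $d_{\ab_i}$ (or uniform equivalence to an invariant metric), an assumption the paper also invokes explicitly.
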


\begin{proof}[Proof of Lemma \ref{lem:rigidity-of-morphisms} assuming Proposition \ref{prop:rigidity}] For each $i\in [k]$ let $\epsilon_i$ be the number $\epsilon(\ab_i,i)>0$ given by Lemma \ref{lem:restricted-rigidity}. Let $0<\epsilon < \min_{i\in [k]}\epsilon_i$, and apply Proposition \ref{prop:rigidity} to obtain the corresponding $\delta>0$ and functions $\psi_{i,y}$. We prove by induction on $i\in [0,k]$ that $\pi_i\co m $ is constant. The case $i=0$ is trivial since $\nss_0$ is the one-point space. We assume that $\pi_{i-1} \co m$ is constant, thus $\pi_{i-1}\co m(\ns) = y \in \nss_{i-1}$. By Proposition \ref{prop:rigidity} we have $\diam(\psi_{i,y}\co \pi_i\co m(\ns))<\epsilon<\epsilon(\ab_i,i)$. This together with the fact that $\psi_{i,y} \co \pi_i \co m$ is a morphism $\ns\to \mc{D}_i(\ab_i)$ implies, by Lemma \ref{lem:restricted-rigidity}, that this map is constant. Hence $\pi_i \co m$ is constant, since $\psi_{i,y}$ is injective.
\end{proof}

\begin{proof}[Proof of Proposition \ref{prop:rigidity}]
We first prove the case $i=k$. By \cite[Proposition A.1]{GMV2} there exists $\delta_k>0$ such that if $x\in\nss$ and $z\in \ab_k$ satisfy $d_{\nss}(x,x+z)<\delta_k$ then $d_{\ab_k}(0,z)<\epsilon$. Now for each fiber $\pi_{k-1}^{-1}(y)$, $y\in \nss_{k-1}$, fix any point $y'$ in this fiber and define $\psi_{k,y}:\pi_{k-1}^{-1}(y)\to \ab_k$ by $\psi_{k,y}(y'+z):=z$ (recalling from \cite[Theorem 3.2.19]{Cand:Notes1} that each point in this fiber is of the form $y'+z$ for a unique $z\in \ab_k$). Note that, since $d_{\ab_k}$ is translation-invariant, if $a,b$ are points in such a fiber $\pi_{k-1}^{-1}(y)$ with $d_{\nss}(a,b)<\delta_k$ and $a=y'+z_1$, $b=y'+z_2$, then $d_{\ab_k}(\psi_{k,y}(a),\psi_{k,y}(b))=d_{\ab_k}(z_1,z_2)=d_{\ab_k}(0, z_2-z_1)\le \epsilon$.

For $i\le k-1$ we argue similarly with $\nss_i$ instead of $\nss$, and with the same fixed $\epsilon>0$. Thus we obtain a function $\psi_{i,y}$ for each $y\in \nss_{i-1}$, and some $\delta'_i>0$ given by applying \cite[Proposition \ A.1]{GMV2}, with the property that for every $a,b\in \nss_i$ in the same fiber of $\pi_{i-1,i}$, if $d_{\nss_i}(a,b)\leq \delta_i'$ then $d_{\ab_i}\big(\psi_{i,y}(a),\psi_{i,y}(b)\big)\leq \epsilon$. Moreover, since $\pi_i:\nss \to \nss_i$ is a continuous function between compact metric spaces, it is uniformly continuous, so there exists $\delta_i>0$ such that if $d_{\nss}(a,b)<\delta_i$ then $d_{\nss_i}(\pi_i(a),\pi_i(b))<\delta'_i$.
 
Finally, we let $\delta=\min_{1\le i\le k} \delta_i$, and the result follows.
\end{proof}
We can now prove the main result of this section. \enlargethispage{0.1cm}
\begin{proof}[Proof of Theorem \ref{thm:morfactor}]
Let $\epsilon$ be the number $\delta$ given by Lemma \ref{lem:rigidity-of-morphisms} applied to $\nss$. Let $\ns=\varprojlim \ns_i$ be an inverse limit decomposition given by \cite[Theorem 4]{CamSzeg} (see also \cite[Theorem 2.7.3]{Cand:Notes2}); thus $\ns_i$ is a compact finite-rank nilspace and $\psi_i:\ns\to\ns_i$ is a fibration for each $i$. Applying Lemma \ref{lem:topo} with $T=\ns$, $T_i=\ns_i$ and $\epsilon$, we obtain $i_0$ such that $\diam(m(\psi_{i_0}^{-1}(x)))<\epsilon$ for all $x\in \ns_{i_0}$. We claim that we can set $Q:=\ns_{i_0}$ and $\psi:=\psi_{i_0}$. To prove this we show that there exists a morphism $\psi':Q\to \nss$ such that $m = \psi' \co \psi$. First note that setting $\psi'(x):= m(\psi^{-1}(x))$ gives a well-defined map $\psi':Q\to \nss$, because $m(\psi^{-1}(x))$ is a singleton for every $x\in Q$. Indeed $\psi^{-1}(x)$ is a sub-nilspace of $\ns$, by Lemma \ref{lem:sub-nilspace}, and $m$ restricted to this fiber is a morphism with image of diameter less than $\epsilon$, so by Lemma \ref{lem:rigidity-of-morphisms} this morphism is constant, so $\psi'$ is indeed well-defined. Moreover $\psi'$ is a morphism, since, by \cite[Lemma 3.3.9]{Cand:Notes1}, for every $\q\in \cu^n(Q)$ there exists $\q'\in \cu^n(\ns)$ such that $\q = \psi \co \q'$, so $\psi'\co \q = m \co \q'\in \cu^n(\nss)$. Finally $\psi'$ is continuous, for if $U$ is a closed subset of $\nss$ then, since $m^{-1}(U) = \psi^{-1} (\psi'^{-1}(U))$, and $\psi$ is surjective (see Remark \ref{rem:fibration}) and is closed \cite[p.\ 171, No.\ 6]{Munkres}, we have $\psi'^{-1}(U)=\psi(m^{-1}(U))$, a closed subset of $Q$.
\end{proof}

\section{Finite-rank-valued fibrations factor through translation-consistent fibrations}\label{sec:transconsfact}

\noindent In this section our main goal is to prove Theorem \ref{thm:trancons}, which we recall here.
\trancons*
\noindent Let us begin by proving Lemma \ref{lem:basic} from the introduction, which we recall here as well.
\basic*
\noindent Given a map $g:\ns\to\ns$, a map $\q:\db{n}\to \ns$, and a set $F\subset \db{n}$, we denote by $g^F(\q)$ the map $\db{n}\to \ns$ defined by $g^F(\q)(v)=g(\q(v))$ if $v\in F$ and $g^F(\q)(v)=\q(v)$ otherwise.
\begin{proof}
The $\alpha$-consistency implies clearly that $\beta$ is a well-defined map.

To see that $\beta$ is a translation, we check that \cite[Definition 3.2.27]{Cand:Notes1} holds: let $\q\in\cu^n(\nss)$ and $F$ be any face of codimension 1 in the cube $\db{n}$, and note that by fiber-surjectivity (see \cite[Lemma 3.3.9]{Cand:Notes1}) there is $\q'\in\cu^n(\ns)$ such that $\psi\co \q'=\q$. Hence $\beta^F(\q)=\psi\co\big(\alpha^F (\q')\big)$. This equality implies that $\beta^F(\q)\in \cu^n(\nss)$, since $\psi$ is a morphism and $\alpha^F(\q')\in \cu^n(\ns)$. 

Moreover, the translation $\beta$ is \emph{continuous}, for if $C\subset \nss$ is closed then $\beta^{-1}(C)=\psi((\psi \co \alpha)^{-1}(C))$ is closed, since $\psi \co \alpha$ is continuous and $\psi$ is a closed surjective map by \cite[p. 171]{Munkres} and Remark \ref{rem:fibration}. 

The last sentence of the lemma is straightforward to check.
\end{proof}

\noindent We now turn to the proof of Theorem \ref{thm:trancons}. Our strategy is to obtain this as a consequence of Theorem \ref{thm:morfactor}. We use the following notation. Given maps $\psi_i:\ns\to\nss_i$, $i\in [n]$, we denote by $\Delta(\psi_1,\ldots,\psi_n)$ their \emph{diagonal product} $\ns\to\nss_1\times \cdots\times \nss_n$, defined by $\Delta(\psi_1,\ldots,\psi_n)(x)= (\psi_1(x),\ldots,\psi_n(x))$, and we denote by $\psi_1\times \cdots\times \psi_n$ their \emph{product} $\ns^n\to\nss_1\times \cdots\times \nss_n$, $(x_1,\ldots,x_n)\mapsto (\psi_1(x_1),\ldots,\psi_n(x_n))$. 

A first fact that follows from Theorem \ref{thm:morfactor}, and which we use for further results in this section, is the following ``common-refinement" lemma.
\begin{lemma}\label{lem:merging}
Let $\ns$, $Q_1, Q_2, \ldots, Q_d$ be compact nilspaces, with $Q_i$ of finite rank for each $i\in [d]$, and let $m_i:\ns\to Q_i$ be a fibration for each $i$. Then there is a compact finite-rank nilspace $Q$ and fibrations $m:\ns\to Q$, $m_i':Q\to Q_i$ such that $m_i=m_i'\co m$ for each $i\in [d]$.
\end{lemma}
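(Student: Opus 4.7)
The plan is to reduce Lemma \ref{lem:merging} to Theorem \ref{thm:morfactor} by collapsing the family of fibrations into a single morphism, namely their diagonal product. First I would form the product nilspace $P := Q_1 \times \cdots \times Q_d$ with its product compact-nilspace structure, noting that $P$ is again a compact finite-rank nilspace (its structure groups are products of the structure groups of the $Q_i$, and finite products of compact Lie groups are compact Lie groups). The diagonal product $\Delta := \Delta(m_1,\ldots,m_d): \ns \to P$ is then a continuous morphism, since each $m_i$ is.

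Applying Theorem \ref{thm:morfactor} to $\Delta$ supplies a compact finite-rank nilspace $Q$, a fibration $m: \ns \to Q$, and a morphism $\psi: Q \to P$ with $\Delta = \psi \co m$. I would set $m_i' := \pi_i \co \psi$, where $\pi_i : P \to Q_i$ is the $i$-th coordinate projection. The required factorization $m_i = \pi_i \co \Delta = m_i' \co m$ then holds by construction, and each $m_i'$ is automatically a continuous morphism.

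The only remaining point, which I expect to be the main obstacle, is to upgrade each $m_i'$ from a morphism to a fibration. I would verify the corner-completion criterion directly: given an $n$-corner $c'$ on $Q$ and a completion $d'' \in \cu^n(Q_i)$ of $m_i' \co c'$, first extend $c'$ to a cube $c \in \cu^n(Q)$ using the corner-completion axiom in $Q$, and then lift $c$ to a cube $\tilde c \in \cu^n(\ns)$ along $m$ via \cite[Lemma 3.3.9]{Cand:Notes1}. Restricting $\tilde c$ to $\db{n} \setminus \{1^n\}$ yields a corner $\tilde c'$ on $\ns$ with $m \co \tilde c' = c'$, so that $m_i \co \tilde c' = m_i' \co c'$ and $d''$ is a completion of $m_i \co \tilde c'$ in $Q_i$. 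The fibration property of $m_i$ then furnishes $\tilde d \in \cu^n(\ns)$ completing $\tilde c'$ with $m_i \co \tilde d = d''$, and $d := m \co \tilde d \in \cu^n(Q)$ is the desired completion of $c'$ satisfying $m_i' \co d = m_i \co \tilde d = d''$. Since the argument is uniform in $i$, this establishes the fibration property of every $m_i'$ and completes the proof.
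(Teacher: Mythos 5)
Your proposal is correct and takes essentially the same route as the paper: form the diagonal product into the finite-rank product nilspace $Q_1\times\cdots\times Q_d$, apply Theorem \ref{thm:morfactor}, and define $m_i'$ by composing with the coordinate projections. The only difference is in the final verification that each $m_i'$ is a fibration, where you use the corner-completion/lifting characterization (correctly), while the paper uses the equivalent fiber-to-fiber characterization of Definition \ref{def:fsmorph}; both are routine.
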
\newpage
\noindent The lemma yields the following commutative diagram:
\vspace{-0.7cm}
\begin{flushright} 
    \includegraphics[width=0.27\textwidth]{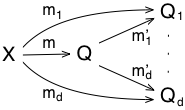} \qquad\mbox{}
\end{flushright}
\begin{proof}
Let $m'=\Delta(m_1,\ldots,m_d)$, and note that this map is a continuous morphism from $\ns$ to the product nilspace $Q_1\times \cdots\times Q_d$. Applying Theorem \ref{thm:morfactor} to $m'$, we obtain a compact finite-rank nilspace $Q$, a fibration $m:\ns\to Q$, and a morphism $\phi:Q\to Q_1\times\cdots\times Q_d$, such that $m'= \phi\co m$. We then set $m_i'=p_i \co \phi$ for $i\in [d]$, where $p_i:Q_1\times\cdots\times Q_d\to Q_i$ are the coordinate projections, which are fibrations. We claim that, since $m_i=m_i'\co m$ and both $m_i$ and $m$ are fiber-surjective, each $m_i'$ is also fiber-surjective. To see this, fix any fiber $F'=\pi_{n,Q}^{-1}(x')$, and note that since $m$ is fiber-surjective there exists a fiber $F=\pi_{n,\ns}^{-1}(x) $ such that $m(F)=F'$. Thus $m_i'(F')=m_i(F)$, and since $m_i$ is fiber-surjective, we have that $m_i'(F')$ is a $\pi_{n,Q_i}$-fiber, as claimed.
\end{proof}
\noindent For two maps $\psi_i:\ns\to\nss_i$, $i=1,2$ defined on all of $\ns$ (but with $\nss_1$, $\nss_2$ possibly different spaces), we write $\psi_1\lesssim \psi_2$ if the partition generated by the latter map refines the partition generated by the former, i.e. if the partitions $\mc{P}_i:=\{\psi_i^{-1}(y):y\in\psi_i(\ns)\}$, $i=1,2$ satisfy that every set in $\mc{P}_1$ is a union of some sets in $\mc{P}_2$. We write $\psi_1\approx \psi_2$ to mean that $\psi_1\lesssim \psi_2$ and $\psi_2\lesssim \psi_1$ both hold. If $\ns,\nss$ are $k$-step nilspaces and $\psi:\ns\to\nss$ is a morphism, then for each $i\in[k]$ there is a morphism $\pi_i(\ns)\to\pi_i(\nss)$, which we denote by $(\psi)_{(i)}$, such that $(\psi)_{(i)}\co \pi_{i\ns}= \pi_{i,\nss}\co \psi$; see \cite[Definition 3.3.1 and Proposition 3.3.2]{Cand:Notes1}. It is seen straight from the definitions that if $\psi$ is a fibration then so is $(\psi)_{(i)}$ for each $i$.

Our proof of Theorem \ref{thm:trancons} works by induction on the step $k$. A key ingredient in the induction is Lemma \ref{lem:congrefine} below. That lemma in turn relies on the fiber-product construction in the category of compact nilspaces, which we detail as follows.
\begin{lemma}\label{lem:fp}
Let $\ns^{(1)},\ns^{(2)},\ns^{(3)}$ be compact nilspaces and let $\psi_1:\ns^{(1)}\to \ns^{(3)}$, $\psi_2:\ns^{(2)}\to \ns^{(3)}$ be fibrations. Then the fiber-product $\ns^{(1)}\times_{\ns^{(3)}}\ns^{(2)}:=\{(x_1,x_2)\in \ns^{(1)}\times \ns^{(2)}: \psi_1(x_1)=\psi_2(x_2)\}$ is a compact sub-nilspace of the product nilspace\footnote{The definition of a product nilspace may be recalled from \cite[Definition 3.1.2]{Cand:Notes1}.} $\ns^{(1)}\times \ns^{(2)}$.
\end{lemma}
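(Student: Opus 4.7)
The plan is to show that $F := \ns^{(1)}\times_{\ns^{(3)}}\ns^{(2)}$ is closed in the compact space $\ns^{(1)}\times\ns^{(2)}$, and then to verify that the inherited cube structure satisfies the three nilspace axioms, with the only nontrivial step being corner completion.

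For compactness, I would observe that $F$ is the preimage of the diagonal $\{(y,y):y\in \ns^{(3)}\}$ under the continuous map $(x_1,x_2)\mapsto(\psi_1(x_1),\psi_2(x_2))$ from $\ns^{(1)}\times\ns^{(2)}$ to $\ns^{(3)}\times\ns^{(3)}$. Since $\ns^{(3)}$ is Hausdorff (being compact metric), the diagonal is closed, so $F$ is closed in the compact product and hence is itself compact.

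I would then equip $F$ with the sub-nilspace cube structure $\cu^n(F):=\cu^n(\ns^{(1)}\times\ns^{(2)})\cap F^{\db{n}}$, so that $\q:\db{n}\to F$ is an $n$-cube on $F$ iff its coordinate projections $\q_i:=p_i\co\q$ are $n$-cubes on $\ns^{(i)}$ for $i=1,2$. Composition and ergodicity are then automatic from the corresponding axioms on the $\ns^{(i)}$: for any $\q\in\cu^n(F)$ and discrete-cube morphism $\phi:\db{m}\to\db{n}$, the composition $\q\co\phi$ takes values in $F$ and has components $\q_i\co\phi\in\cu^m(\ns^{(i)})$; likewise any constant map to $F$ has constant projections, which are cubes in each factor.

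The main step is corner completion. Given an $n$-corner $\q':\db{n}\setminus\{1^n\}\to F$, let $\q'_i:=p_i\co\q'$, an $n$-corner on $\ns^{(i)}$. Since $\ns^{(1)}$ is a nilspace I can pick a completion $\q_1\in\cu^n(\ns^{(1)})$ of $\q'_1$. Then $\psi_1\co\q_1\in\cu^n(\ns^{(3)})$ is a cube completing the $n$-corner $\psi_1\co\q'_1=\psi_2\co\q'_2$ in $\ns^{(3)}$ (the equality holds because $\q'$ takes values in $F$). Invoking the fibration property of $\psi_2$ in its corner-lifting form (last sentence of Definition \ref{def:fsmorph}), there exists $\q_2\in\cu^n(\ns^{(2)})$ completing $\q'_2$ with $\psi_2\co\q_2=\psi_1\co\q_1$. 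The pair $(\q_1,\q_2)$ is then an $n$-cube on $\ns^{(1)}\times\ns^{(2)}$ whose image lies in $F$ and which completes $\q'$, as required. I do not foresee a serious obstacle: compactness is purely topological, composition and ergodicity are inherited from the factors, and corner completion is a clean two-step argument---complete in one factor, then lift the resulting cube over $\ns^{(3)}$ using the fibration property of the other factor. By symmetry it is immaterial which factor is completed first.
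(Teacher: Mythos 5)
Your proof is correct and follows essentially the same route as the paper: take the cubes of the fiber product to be the $F$-valued cubes of the product nilspace, note that composition and ergodicity are inherited, and for corner completion complete the corner in one factor and then lift the resulting cube of $\ns^{(3)}$ through the fibration on the other factor (the paper completes in $\ns^{(2)}$ first and uses that $\psi_1$ is a fibration; you do the mirror image, which is equally valid). Your explicit closedness/compactness argument via the diagonal is a small addition the paper leaves implicit.
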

\begin{proof}
Let $Q=\ns^{(1)}\times_{\ns^{(3)}}\ns^{(2)}$. We have to check that, if for each $n\geq 0$ we let $\cu^n(Q)$ consist of the $Q$-valued cubes in $\cu^n(\ns^{(1)}\times \ns^{(2)})$, then these cube sets $\cu^n(Q)$ satisfy the nilspace axioms. The composition and ergodicity axioms are easily verified. Let us check the corner-completion axiom. Let $\Delta(\q_1', \q_2')\in \cor^n(Q)$,  which implies that $\q_1'\in \cor^n(\ns^{(1)})$ and $\q_2'\in \cor^n(\ns^{(2)})$. Let $\q_2\in \cu^n(\ns^{(2)})$ be a completion of $\q_2'$. By definition of $Q$ we have $\psi_1 \co \q_1'(v)=\psi_2 \co \q_2(v)$ for all $v\not=1^n$. Therefore $\psi_2 \co \q_2$ is a completion of the corner $\psi_1 \co \q_1'$, so, since $\psi_1$ is a fibration, there exists $\q_1\in \cu^n(\ns_1)$ that completes $\q_1'$ such that $\psi_1 \co \q_1=\psi_2 \co \q_2$. Therefore $\Delta(\q_1, \q_2)$ completes $\Delta(\q_1',\q_2')$.
\end{proof}
\begin{lemma}\label{lem:congrefine}
Let $\ns$ be a $k$-step compact nilspace, let $\psi_1:\ns\to\nss$ be a fibration, let $\psi_2:\ns\to W$ be a fibration that factors through $\pi_{k-1,\ns}$, let $\psi_3:W\to\nss_{k-1}$ be a fibration such that $\pi_{k-1,\nss}\co \psi_1 = \psi_3\co\psi_2$, and let $\psi:=\Delta(\psi_1,\psi_2)$. Then $\psi$ is a fibration $\ns\to \nss\times_{\nss_{k-1}} W$. Moreover $\big(\psi)_{(k-1)}\approx (\psi_2)_{(k-1)}$, and if $W$ and $\nss$ are of finite rank then so is $\psi(\ns)$.
\end{lemma}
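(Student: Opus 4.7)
The plan is to prove the three assertions in turn. Observe first that the commutation $\pi_{k-1,\nss}\co\psi_1=\psi_3\co\psi_2$ ensures $\psi(\ns)\subseteq \nss\times_{\nss_{k-1}} W$, and Lemma \ref{lem:fp} identifies this fiber product as a compact sub-nilspace of $\nss\times W$. Note also that $W$ is necessarily $(k-1)$-step, being a surjective image (via the induced morphism $\bar\psi_2:\ns_{k-1}\to W$, where $\psi_2=\bar\psi_2\co\pi_{k-1,\ns}$) of the $(k-1)$-step nilspace $\ns_{k-1}$.

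For the fibration property, I would fix an $n$-corner $\q'\in\cor^n(\ns)$ and a completion $\q''=(\q''_1,\q''_2)$ of $\psi\co\q'$ in the fiber product, then use the fibration $\psi_2$ to produce $\q_2\in\cu^n(\ns)$ completing $\q'$ with $\psi_2\co\q_2=\q''_2$. The two cubes $\psi_1\co\q_2$ and $\q''_1$ in $\nss$ agree on $\db{n}\setminus\{1^n\}$, and their images under $\pi_{k-1,\nss}$ both coincide with $\psi_3\co\q''_2$, so they lie in the same $\pi_{k-1,\nss}$-fiber over $1^n$. The main obstacle is to modify $\q_2$ at $1^n$ so that $\psi_1$ applied to the modification gives $\q''_1(1^n)$, without spoiling the relation $\psi_2\co\q=\q''_2$. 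I would split into cases: if $n>k$, then $\nss$ being $k$-step gives unique completion of $n$-corners, so $\psi_1\co\q_2=\q''_1$ already and $\q:=\q_2$ works. If $n\le k$, let $s\in\ab_k(\nss)$ satisfy $\q''_1(1^n)=\psi_1(\q_2(1^n))+s$, and lift $s$ to some $t\in\ab_k(\ns)$ using the surjectivity of the map $\ab_k(\ns)\to\ab_k(\nss)$ induced by the fibration $\psi_1$ (cf.\ \cite[Lemma 3.3.8]{Cand:Notes1}). Define $\q$ to agree with $\q_2$ except at $1^n$, where $\q(1^n):=\q_2(1^n)+t$. Since $n\le k$, there are no $(k+1)$-face maps in $\db{n}$, so the perturbation, viewed as a $\db{n}\to\ab_k(\ns)$ map supported at $1^n$, lies in $\cu^n(\mc{D}_k(\ab_k(\ns)))$, whence $\q\in\cu^n(\ns)$ by the standard action of $\ab_k$-valued cubes on cubes in $\ns$. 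Moreover $\psi_2\co\q=\psi_2\co\q_2=\q''_2$, since $\psi_2$ factors through $\pi_{k-1,\ns}$ and is thus insensitive to perturbations by elements of $\ab_k(\ns)$.

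For $(\psi)_{(k-1)}\approx(\psi_2)_{(k-1)}$, the key observation is that $\psi_3:W\to\nss_{k-1}$ itself factors through $\pi_{k-1,W}$, since $\nss_{k-1}$ is $(k-1)$-step. It follows that the $\sim_{k-1}$-relation on the fiber product (inherited from $\nss\times W$) reduces to the condition $\pi_{k-1,W}(w_1)=\pi_{k-1,W}(w_2)$ on the $W$-coordinate alone: the corresponding relation on the $\nss$-coordinate is automatic, since for $(y_i,w_i)\in\nss\times_{\nss_{k-1}} W$ we have $\pi_{k-1,\nss}(y_i)=\psi_3(w_i)$. Thus $(\nss\times_{\nss_{k-1}} W)_{k-1}$ is identified with $W_{k-1}$ via the second projection, and under this identification $(\psi)_{(k-1)}$ coincides with $(\psi_2)_{(k-1)}$, so the partitions of $\ns_{k-1}$ induced by the two maps agree.

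Finally, for the finite-rank claim, the structure groups of $\nss\times W$ are products of those of $\nss$ and $W$, and hence compact Lie when both factors are of finite rank. The fiber product $\nss\times_{\nss_{k-1}} W$ is a closed sub-nilspace of $\nss\times W$, so its structure groups embed as closed subgroups of the corresponding compact Lie groups and therefore remain Lie. By Remark \ref{rem:fibration}, the fibration $\psi$ is surjective, so $\psi(\ns)=\nss\times_{\nss_{k-1}} W$, which is of finite rank by the preceding observation.
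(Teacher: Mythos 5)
Your argument is correct, and for the main claim (that $\psi$ is a fibration onto the fiber product) it is essentially the paper's proof: complete the corner through $\psi_2$ first, observe that $\psi_1\co\q_2$ and $\q_1''$ differ only at $1^n$ and only within a $\pi_{k-1,\nss}$-fiber, and correct by lifting the discrepancy through the surjective $k$-th structure morphism of $\psi_1$; your explicit case split $n>k$ versus $n\le k$ is just a more pedantic version of the paper's step ``there is $\q''\in\cu^n(\mc{D}_k(\ab_k(\nss)))$ with $\psi_1\co\q_3+\q''=\q_1$''. The two secondary claims are where you diverge slightly: the paper builds an explicit compact-nilspace isomorphism $\varphi:W\to Q_{k-1}$ and shows $\ab_k(Q)\cong\ab_k(\nss)$, and reads both $(\psi)_{(k-1)}\approx(\psi_2)_{(k-1)}$ and the finite-rank statement off this structural description of $Q$; you instead invoke soft facts about sub-nilspaces of products (that $\sim_{k-1}$ on $Q$ is the restriction of $\sim_{k-1}$ on $\nss\times W$, and that the structure groups of $Q$ embed as closed subgroups of those of $\nss\times W$). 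These facts are true but not free: both directions rest on the functoriality of the factor maps applied to the inclusion morphism $Q\hookrightarrow\nss\times W$ (\cite[Proposition 3.3.2]{Cand:Notes1}), and for the $\approx$ claim you also implicitly use the coordinate projection $Q\to W$ to see that $\pi_{k-1,Q}(y_1,w_1)=\pi_{k-1,Q}(y_2,w_2)$ forces $w_1=w_2$; it would be worth one sentence making those citations explicit. With that caveat your route is a legitimate, somewhat softer alternative to the paper's explicit description of $Q$.
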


\vspace{0.8cm}

\noindent The following diagram illustrates the assumptions: \vspace{-1.5cm}
\begin{flushright} 
\includegraphics[width=0.33\textwidth]{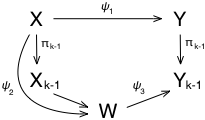}\end{flushright}
\begin{proof}
Let $Q$ denote the fiber-product nilspace $\nss\times_{\nss_{k-1}} W$ for the fibrations $\pi_{k-1,\nss},\psi_3$. We claim that $\psi(\ns)=Q$. The inclusion $\psi(\ns) \subseteq Q$ is clear since $\pi_{k-1,\nss}\co \psi_1 = \psi_3\co\psi_2$. For the opposite inclusion, let $(a,b)\in Q$  and $x\in \ns$ be any element with $\psi_2(x)=b$ (such an $x$ exists by the surjectivity of the fibration $\psi_2$). Letting $\ab_k(\ns)$ denote the $k$-th structure group of $\ns$, for every $z\in \ab_k(\ns)$ we have $\psi_2(x+z)=\psi_2(x)$, since $\psi_2$ factors through $\pi_{k-1,\ns}$. Now $\pi_{k-1}(a)=\psi_3(b)=\psi_3(\psi_2(x))=\pi_{k-1}(\psi_1(x))$. Thus there exists $z'\in \ab_k(\nss)$ such that $a=\psi_1(x)+z'=\psi_1(x+z)$ for some $z\in \ab_k(\ns)$, where the last equality follows from the fiber-surjectivity of $\psi_1$. Hence $\psi(x+z)=(a,b)$, and the  inclusion follows.

Note that from the definitions it is clear that $\psi$ is a morphism $\ns\to Q$. We now prove that $\psi$ is a fibration. Let $\q'\in\cor^n(\ns)$, and let $\tilde \q\in \cu^n(Q)$ be a completion of $\psi\co \q'$, thus $\tilde\q=\Delta(\q_1,\q_2)$ for $\q_1\in \cu^n(\nss)$, $\q_2\in \cu^n(W)$ completing $\psi_1\co \q'$, $\psi_2\co \q'$ respectively, and satisfying $\psi_3\co\q_2=\pi_{k-1,\nss}\co \q_1$. Since $\psi_2$ is a fibration, we can complete $\q'$ to $\q_3 \in \cu^n(\ns)$ with $\psi_2\co \q_3 = \q_2$. Now $\pi_{k-1,\nss}\co \psi_1\co \q_3=\psi_3\co \psi_2\co \q_3=\psi_3\co\q_2=\pi_{k-1,\nss}\co \q_1$, so $\psi_1\co \q_3$ and $\q_1$ are in the same $\pi_{k-1,\nss}$-fiber, so there is $\q''\in \cu^n(\mc{D}_k(\ab_k(\nss)))$ such that $\psi_1\co \q_3 + \q'' = \q_1$. Since $\psi_1$ is a fibration, its $k$-th structure morphism\footnote{The notion of the \emph{structure morphisms} of a nilspace morphism may be recalled from \cite[Definition 3.3.1]{Cand:Notes1}.} $\ab_k(\ns)\to \ab_k(\nss)$ is surjective, whence there is $\q_4\in \cu^n(\mc{D}_k(\ab_k(\ns)))$ such that $\psi_1\co (\q_3+\q_4)=\psi_1\co \q_3 +\q'' =\q_1$, and since $\psi_2$ factors through $\pi_{k-1,\ns}$ we still have  $\psi_2\co (\q_3+\q_4)=\q_2$, so $\q_3+\q_4$ completes $\q'$ as required.

To prove the remaining claims in the lemma, it is useful first to give a more precise description of $Q$ in terms of $\nss$ and $W$. We first show that $Q_{k-1}$ is isomorphic to $W$ as a compact nilspace. The isomorphism is given by the map $\varphi:W\to Q_{k-1}$ defined by $\varphi(b)=\pi_{k-1,Q}(a,b)$, for any $a\in \nss$ such that $\pi_{k-1}(a)=\psi_3(b)$. This is well-defined because if $a,a'\in \nss$ satisfy $\pi_{k-1}(a)=\pi_{k-1}(a')$ then $a'=a+z$ for some $z\in \ab_k(\nss)$, and then from basic properties of the relation $\sim_{k-1}$ (see \cite[Lemma 3.2.4, Remark 3.2.12]{Cand:Notes1}) it follows that $\pi_{k-1,Q}(a,b)=\pi_{k-1,Q}(a+z,b)$. The map $\varphi$ is injective,  because if $\varphi(b)=\varphi(b')$ then the definition of $\sim_{k-1}$ on $Q$ and the fact that $W$ is $(k-1)$-step imply that $b=b'$. By definition of $\varphi$ and the fact that $\pi_{k-1,Q}$ is surjective, we also have that $\varphi$ is surjective. Let us now check that $\varphi$ and $\varphi^{-1}$ are both  morphisms. To see that $\varphi$ is a morphism, note that if $\q\in\cu^n(W)$ then $\psi_3 \co \q\in \cu^n(\nss_{k-1})$ and there exists $\q'\in \cu^n(\nss)$ such that $\pi_{k-1}\co \q'=\psi_3 \co \q$, whence $\varphi \co \q = \pi_{k-1,Q} \co\big(\Delta(\q', \q)\big) \in \cu^n(Q_{k-1})$. That $\varphi^{-1}$ is a morphism follows from the definition of $\varphi$ and $Q$ and the fact that $\pi_{k-1,Q}$ is a fibration. Let us show that $\varphi^{-1}$ is continuous.  Let $p:Q\to W$ be the projection $(a,b)\mapsto b$, which is continuous and satisfies $\varphi^{-1} \co \pi_{k-1,Q}=p$. Then for any open set $U\subset Q_{k-1}$, we have $\pi_{k-1,Q}^{-1}(\varphi(U))=p^{-1}(U)$, and since $\pi_{k-1,Q}$ is surjective we have $\varphi(U)=\pi_{k-1,Q}(p^{-1}(U))$. Since $p$ is continuous and $\pi_{k-1,Q}$ is open (see \cite[Remark 2.1.7]{Cand:Notes2}), we have that $\varphi(U)$ is open, and the continuity of $\varphi^{-1}$ follows. We thus have a continuous bijection $\varphi^{-1}$ between compact metric spaces, so $\varphi$ is a homeomorphism. Having 
 shown that $Q_{k-1}$ is isomorphic to $W$, let us now complete the  description of $Q$ by describing $\ab_k(Q)$. We claim that $\ab_k(Q)$ is isomorphic as a compact abelian group to $\ab_k(\nss)$. To see this, it suffices to show that for any fiber $F$ of $\sim_{k-1}$ on $Q$, as a compact sub-nilspace of $Q$ this fiber is isomorphic to $\mc{D}_k(\ab_k(\nss))$ (the isomorphism of the structure groups then follows from known theory; see for instance the end of the proof of \cite[Proposition 2.1.9]{Cand:Notes2}). Fix $(a_0,b_0)\in F$ and note that by definition of $\sim_{k-1}$ on $Q$ we have that every $(a,b)\in F$ is $(a_0+z,b_0)$ for some unique $z\in \ab_k(\nss)$. Let $\tau:F\to \ab_k(\nss)$ be the map sending $(a,b)$ to this unique $z$. Using that $Q$ is a sub-nilspace of $\nss\times W$, it is checked in a straightforward way that $\tau$ is a compact nilspace isomorphism $F\to \mc{D}_k\big(\ab_k(\nss)\big)$ ($\tau$ is clearly a bijection, and the cube-preserving properties can be checked using \cite[(2.9)]{Cand:Notes1}).

We can now prove the last claims in the lemma. From the definition of the isomorphism $\varphi$ above and the assumption $\pi_{k-1,\nss}\co\psi_1=\psi_3\co\psi_2$, it follows that $(\psi)_{(k-1)}\co\pi_{k-1,\ns}=\varphi \co \psi_2$, and from this it is easily deduced that $\big(\psi)_{(k-1)}\approx (\psi_2)_{(k-1)}$. Finally, by the above description of $Q$ it is clear that if $W$ and $\nss$ are of finite rank then all the structure groups of $Q$ are Lie groups, so $Q$ is also of finite rank. 
\end{proof} 

\noindent For a map $f:A\to B$ we write $a\sim_f a'$ if $f(a)=f(a')$. Our proof of Theorem \ref{thm:trancons} uses the next fact.
\begin{lemma}\label{lem:small}
Let $\psi:\ns\to\nss$ and $R:\ns\to\nss'$ be fibrations with $\psi\lesssim R$, let $\phi_k$ be the $k$-th structure morphism of $\psi$, and let $\varphi= \Delta\big(\psi, (R)_{(k-1)}\co\pi_{k-1,\ns}\big)$. If $x\sim_\varphi y$ then $x\sim_R y+z$ for some $z\in \ker(\phi_k)$.
\end{lemma}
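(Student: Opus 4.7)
My plan is to unpack the hypothesis $x\sim_\varphi y$ into its two components, find the required $z$ using the fact that $R$ is a fibration, and then use the refinement hypothesis $\psi\lesssim R$ to force $z$ into $\ker(\phi_k)$.

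First, the equality $\varphi(x)=\varphi(y)$ amounts to two pieces of information: (a) $\psi(x)=\psi(y)$, and (b) $(R)_{(k-1)}\co \pi_{k-1,\ns}(x)=(R)_{(k-1)}\co \pi_{k-1,\ns}(y)$. Using the intertwining identity $(R)_{(k-1)}\co \pi_{k-1,\ns}=\pi_{k-1,\nss'}\co R$ (from \cite[Proposition 3.3.2]{Cand:Notes1}), condition (b) translates into $R(x)\sim_{k-1} R(y)$ in $\nss'$.

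Next, since $\nss'$ is at most $k$-step, the relation $R(x)\sim_{k-1} R(y)$ means that the two points lie in the same $\pi_{k-1,\nss'}$-fiber, and by the simple-transitivity of the action of $\ab_k(\nss')$ on such fibers (see \cite[Theorem 3.2.19]{Cand:Notes1}) there is a unique $w\in \ab_k(\nss')$ with $R(x)=R(y)+w$. Because $R$ is a fibration, its $k$-th structure morphism $\rho_k:\ab_k(\ns)\to \ab_k(\nss')$ is surjective, so we can pick $z\in \ab_k(\ns)$ with $\rho_k(z)=w$. Using that any nilspace morphism intertwines the $\ab_k$-action with its $k$-th structure morphism, we obtain $R(y+z)=R(y)+\rho_k(z)=R(y)+w=R(x)$, i.e.\ $x\sim_R y+z$.

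It remains to check $z\in \ker(\phi_k)$. The refinement hypothesis $\psi\lesssim R$ says that the $R$-partition refines the $\psi$-partition, so $R(y+z)=R(x)$ implies $\psi(y+z)=\psi(x)$, which by (a) equals $\psi(y)$. Applying the same intertwining property to $\psi$, we get $\psi(y)+\phi_k(z)=\psi(y+z)=\psi(y)$, and simple-transitivity of the $\ab_k(\nss)$-action on a $\pi_{k-1,\nss}$-fiber forces $\phi_k(z)=0$. The only delicate point is making sure the signs and the direction of the lift are chosen consistently, but this is routine given the transitivity of the structure-group actions; the real content is that the surjectivity of $\rho_k$ furnishes the correction $z$, and the hypothesis $\psi\lesssim R$ then forces that correction to kill $\phi_k$.
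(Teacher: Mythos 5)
Your proof is correct and follows essentially the same route as the paper's: unpack $x\sim_\varphi y$ into $\psi(x)=\psi(y)$ and $R(x)\sim_{k-1}R(y)$, use the fibration property of $R$ to produce $z\in\ab_k(\ns)$ with $R(x)=R(y+z)$, and then use $\psi\lesssim R$ together with the equivariance $\psi(y+z)=\psi(y)+\phi_k(z)$ to conclude $z\in\ker(\phi_k)$. The only difference is that you spell out why the fibration hypothesis yields $z$ (via simple transitivity of the $\ab_k(\nss')$-action and surjectivity of the $k$-th structure morphism of $R$), a step the paper leaves implicit.
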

\begin{proof}
If $x\sim_\varphi y$ then $\psi(x)=\psi(y)$ and $R(x) \sim_{k-1} R(y)$ (since $(R)_{(k-1)}\co\pi_{k-1,\ns}=\pi_{k-1,\nss'}\co R$). Then, since $R$ is a fibration, there exists $z\in \ab_k(\ns)$ such that $R(x)=R(y+z)$. Since $\psi \lesssim R$, we deduce that $\psi(x)=\psi(y+z)=\psi(x)+\phi_k(z)$. Hence $z\in \ker(\phi_k)$.
\end{proof}

\begin{proof}[Proof of Theorem \ref{thm:trancons}]
We argue by induction on $k$. The result is trivial for $k=0$. Let $k>0$ and assume that the result holds for $k-1$.

By Lemma \ref{lem:merging}, there is a finite-rank nilspace $Q'$, fibrations $q':\ns\to Q'$, $m':Q'\to \nss'$, and $m'_\alpha:Q'\to \nss'$ for each $\alpha\in H$, with $\psi'=m'\co q'$ and $\psi'\co\alpha=m'_\alpha\co q'$ for each $\alpha$. Note that $x\sim_{q'} x'$ implies that $x\sim_{\psi'\co\alpha} x'$ for each $\alpha$. Let $H'=\{(\alpha)_{(k-1)}:\alpha\in H\}$. Let $q_2:\ns_{k-1}\to W$ be an $H'$-consistent fibration on $\ns_{k-1}$ obtained by applying the inductive hypothesis to $(q')_{(k-1)}:\ns_{k-1}\to Q'_{k-1}$ (in particular $q_2\gtrsim (q')_{(k-1)}$), and let $p:W\to Q'_{k-1}$ be the fibration such that $p\co q_2 =(q')_{(k-1)}$. Let $\psi_3$ denote the map $(m')_{(k-1)}\co p: W\to \nss'_{k-1}$ (thus, in this inductive application of Theorem \ref{thm:trancons}, the objects $\psi,\nss$ from the conclusion of the theorem are denoted here by $q_2,W$ respectively). Let $\psi_2:=q_2\co \pi_{k-1,\ns}$. Note that for each $\alpha$, since $q_2$ is $(\alpha)_{(k-1)}$-consistent and $\pi_{k-1,\ns}$ is $\alpha$-consistent, we have that $\psi_2$ is $\alpha$-consistent. The following diagram illustrates the situation:
\begin{center}
\includegraphics[width=0.35\textwidth]{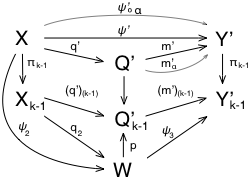}
\end{center}
We now claim that $\psi:=\Delta(\psi',\psi_2)$ satisfies the required conclusion. We know that $\psi$ is a fibration, by Lemma \ref{lem:congrefine} applied with $\psi_1=\psi'$ and $\psi_2,\psi_3$ defined above. We also know by that lemma that, since $W$ and $\nss'$ are of finite rank, so is $\psi(\ns)$.

Let us check that $\psi$ is $H$-consistent. Fix any $\alpha\in H$ and suppose that $x\sim_{\psi} y$. We have to show that $\alpha(x)\sim_{\psi} \alpha(y)$. Firstly we claim that $\alpha(x)\sim_{\psi'} \alpha(y)$. To see this, note that since $(q')_{(k-1)}\lesssim q_2$, we have $\Delta(\psi', (q')_{(k-1)}\co\pi_{k-1,\ns}) \lesssim \Delta(\psi', q_2\co \pi_{k-1,\ns})=\psi$, and then by Lemma \ref{lem:small} applied to $\psi'$ and $R=q'$, it follows that $q'(x)= q'(y+z)$ for some $z\in\ker(\phi_k)$ (where $\phi_k$ is the $k$-th structure morphism of $\psi'$). Applying $m'_{\alpha}$ to both sides of the last equality, we deduce that $\psi'(\alpha(x))=\psi'(\alpha(y+z))$. We now use basic properties of $k$-step nilspaces, namely that translations commute with the action of $\ab_k$ \cite[Lemma 3.2.37]{Cand:Notes1}, and the equivariance property involving $\phi_k$ given by \cite[Proposition 3.3.2 and Definition 3.3.1 (ii)]{Cand:Notes1}, to deduce that $\psi'(\alpha(y+z))=\psi'(\alpha(y)+z)=\psi'(\alpha(y))+\phi_k(z)=\psi'(\alpha(y))$, so $\alpha(x)\sim_{\psi'} \alpha(y)$ as claimed. Then, the $\alpha$-consistency of $\psi_2$ (seen above) implies $\alpha(x)\sim_{\Delta(\psi', \psi_2)} \alpha(y)$, so $\psi$ is $\alpha$-consistent. Since this holds for all $\alpha\in H$, the result follows.
\end{proof}

\section{An inverse limit theorem for nilspace systems}\label{sec:invlimthms}

\noindent As recalled in the introduction, one of the main results in  nilspace theory is the inverse limit theorem, which states that every compact nilspace is an inverse limit of compact finite-rank nilspaces. In this section we prove the following stronger version of this result.
\vspace{-0.7cm}\transinvlim*
\begin{proof}
Recall that by the inverse limit theorem \cite[Theorem 2.7.3]{Cand:Notes2} there is a strict inverse system $\{\psi_{i,j}':\ns_j'\to\ns_i'\}$ such that $\ns=\varprojlim \ns_i'$. We use the following inductive argument to  upgrade this inverse system gradually. Starting with the limit map $\psi_1':\ns \to \ns_1'$, we use Theorem \ref{thm:trancons} to obtain a finite-rank nilspace $\ns_1$, an $H$-consistent fibration $\psi_1:\ns \to \ns_1$ and a fibration $q_1:\ns_1\to \ns_1'$ with $\psi_1'=q_1\co\psi_1$. Suppose now that we have upgraded the system up to $i$, thus we have $H$-consistent fibrations $\psi_j:\ns\to\ns_j$, fibrations $q_j:\ns_j\to\ns_j'$, $j\in [i]$, and also fibrations $\psi_{j,k}:\ns_k\to\ns_j$ for $1\leq j\leq k\leq i$. Then we apply Lemma \ref{lem:merging} to $\psi_i$ and $\psi_{i+1}'$ to obtain a fibration $\psi_{i+1}'':\ns\to\ns_{i+1}''$ through which $\psi_i$ and $\psi_{i+1}'$ both factor. Then we apply Theorem \ref{thm:trancons} to $\psi_{i+1}''$ to refine it to an $H$-consistent fibration $\psi_{i+1}:\ns\to\ns_{i+1}$. Continuing this way, the result follows.
\end{proof}
\noindent The following diagram illustrates the inductive step: \vspace{-0.7cm}
\begin{flushright}
    \includegraphics[width=0.4\textwidth]{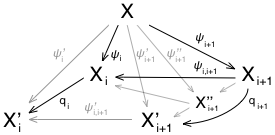}
\end{flushright}
\noindent Recall that a measure-preserving action of a countable discrete group $G$ on a probability space $(\Omega,\mc{A},\mu)$ is \emph{ergodic} if $\forall\,A\in\mc{A}$, $(\forall\,g\in G,\, \mu( (g\cdot A) \Delta A)=0)\Rightarrow  \mu(A)\in\{0,1\}$.

We obtain the following consequence of Theorem \ref{thm:transinvlim}.

\begin{theorem}\label{thm:nilsysinvlim}
Let $\ns$ be a $k$-step compact nilspace and let $H$ be a finitely generated subgroup of $\tran(\ns)$ acting ergodically on $\ns$ \textup{(}relative to the Haar probability measure on $\ns$\textup{)}. Then the nilspace system $(\ns,\mu,H)$ is an inverse limit of $k$-step nilsystems.
\end{theorem}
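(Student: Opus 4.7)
The plan is to deduce the theorem from Theorem \ref{thm:transinvlim} by showing that on each finite-rank factor produced by that theorem, the induced action inherits ergodicity and the factor system is therefore a classical nilsystem.

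First I would fix a finite generating set $S\subset H$ and apply Theorem \ref{thm:transinvlim} to $\ns$ with this $S$, obtaining a strict inverse system $(\psi_{i,j}:\ns_j\to\ns_i)_{i\le j}$ of compact finite-rank $k$-step nilspaces with $\ns=\varprojlim\ns_i$ and with $\langle S\rangle$-consistent, hence $H$-consistent, limit maps $\psi_i:\ns\to\ns_i$. By Lemma \ref{lem:basic} each $\psi_i$ yields a continuous group homomorphism $\wh{\psi_i}:H\to\tran(\ns_i)$ satisfying $\psi_i\co\alpha=\wh{\psi_i}(\alpha)\co\psi_i$; set $H_i:=\wh{\psi_i}(H)$. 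A short diagram chase using $\psi_i=\psi_{i,j}\co\psi_j$ and the surjectivity of $\psi_j$ shows that $\psi_{i,j}$ is $H_j$-consistent, so Lemma \ref{lem:basic} also produces compatible homomorphisms $\wh{\psi_{i,j}}:H_j\to H_i$ with $\wh{\psi_{i,j}}\co\wh{\psi_j}=\wh{\psi_i}$.

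Next I would transfer ergodicity to each factor. Every fibration between compact nilspaces pushes the Haar measure on the source to the Haar measure on the target (by the translation-invariance characterization of Haar on compact nilspaces, cf.\ \cite[\S 2.2]{Cand:Notes2}), so if $B\subset\ns_i$ is $H_i$-invariant modulo null sets, the equivariance of $\psi_i$ makes $A:=\psi_i^{-1}(B)\subset\ns$ an $H$-invariant set modulo null sets, and ergodicity of $H$ on $(\ns,\mu)$ forces $\mu_i(B)=\mu(A)\in\{0,1\}$. Thus each $(\ns_i,\mu_i,H_i)$ is an ergodic compact finite-rank $k$-step nilspace system.

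The main obstacle, and the step on which the argument hinges, will be to show that an ergodic compact finite-rank $k$-step nilspace system is necessarily a $k$-step nilsystem in the classical sense, i.e., arises from a filtered $k$-step nilpotent Lie group $G$, a uniform lattice $\Gamma\leq G$, and a homomorphism $H_i\to G$ acting on $G/\Gamma$ by left translation. For this I would appeal to the structural theory of compact finite-rank nilspaces developed in \cite[\S 2.9]{Cand:Notes2} and \cite{GMV1,GMV2}, according to which ergodicity of a translation-group action forces the underlying nilspace to be a filtered nilmanifold whose translations are realised by elements of the ambient nilpotent Lie group. Granted this identification, the equivariance of each $\psi_{i,j}$ makes $((\ns_i,\mu_i,H_i),\wh{\psi_{i,j}})$ a strict inverse system of $k$-step nilsystems, and $\ns=\varprojlim\ns_i$ combined with the equivariance of the limit maps $\psi_i$ exhibits $(\ns,\mu,H)$ as its inverse limit, as required.
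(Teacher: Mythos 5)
Your overall route coincides with the paper's: apply Theorem \ref{thm:transinvlim} with a finite generating set, push $H$ down to each factor $\ns_i$ via Lemma \ref{lem:basic}, and transfer ergodicity along the measure-preserving limit maps. Those first two paragraphs are correct and usefully fill in details the paper leaves implicit. The problem is the third paragraph, which you yourself flag as the step on which everything hinges: the claim that an ergodic compact finite-rank $k$-step nilspace system is automatically a classical nilsystem cannot simply be cited from \cite[\S 2.9]{Cand:Notes2} or \cite{GMV1,GMV2}. What those sources provide (via \cite[Corollary 3.3]{CamSzeg}, restated as \cite[Corollary 2.9.12]{Cand:Notes2}) is that the identity component $\tran(\nss)^0$ acts transitively on each \emph{connected component} of a compact finite-rank nilspace $\nss$. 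A general finite-rank nilspace may be disconnected (e.g.\ finite), in which case $\tran(\nss)^0$ alone does not act transitively and $\nss$ is not a nilmanifold for that group. Bridging this is exactly the content of the paper's Lemma \ref{lem:transact}, and it is the one place where ergodicity is genuinely used; asserting it as known leaves a real gap.

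The missing argument is short but not automatic. Since $\ns_i$ has finite rank it is a finite-dimensional manifold \cite[Lemma 2.5.3]{Cand:Notes2}, hence locally connected, so its connected components are open \cite[Theorem 25.3]{Munkres} and, the Haar measure being strictly positive \cite[Proposition 2.2.11]{Cand:Notes2}, each component has positive measure. If for some pair of components $C,C'$ one had $\mu_i(g(C)\cap C')=0$ for every $g\in H_i$, then $\bigcup_{g\in H_i} g\cdot C$ would be an $H_i$-invariant set of measure strictly between $0$ and $1$, contradicting the ergodicity you established on the factor. Hence $H_i$ links any two components, and combining this with the transitivity of $\tran(\ns_i)^0$ on each component shows that the $k$-step nilpotent Lie group $G_i:=\langle\tran(\ns_i)^0,H_i\rangle$ (here the finite generation of $H$ is needed to keep $G_i$ a Lie group) acts transitively on $\ns_i$, so that $\ns_i\cong G_i/\Gamma_i$ with $\Gamma_i$ a stabilizer, and the factor system is a genuine $k$-step nilsystem. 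With that lemma supplied, your assembly of the strict inverse system of nilsystems and the identification of $(\ns,\mu,H)$ as its inverse limit goes through.
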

\noindent This follows immediately by combining Theorem \ref{thm:transinvlim} with the following basic lemma.
\begin{lemma}\label{lem:transact}
Let $\nss$ be a $k$-step compact finite-rank nilspace, and let $H$ be a finitely generated subgroup of $\tran(\nss)$ acting ergodically on $\nss$. Then the $k$-step nilpotent Lie group $\langle\tran(\nss)^0, H\rangle$ acts transitively on $\nss$.
\end{lemma}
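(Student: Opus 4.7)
The strategy is to exploit the fact that $G:=\tran(\nss)$ is a $k$-step nilpotent Lie group acting transitively on $\nss$, which is a standard feature of compact finite-rank nilspaces; in particular the identity component $G_0:=\tran(\nss)^0$ is an open normal subgroup of $G$, and the proof reduces to combining this Lie-theoretic structure with the ergodicity hypothesis.

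First I would show that the $G_0$-orbits on $\nss$ form a partition into finitely many open subsets of equal Haar measure. Writing $\nss = G/K$ where $K$ is the stabilizer of some basepoint $x_0$, each $G_0$-orbit has the form $g G_0 \cdot x_0$ (using normality of $G_0$), and thus corresponds to a coset of the subgroup $G_0 K$ in $G$. Since $G_0$ is open in $G$, so is $G_0 K$, whence every $G_0$-orbit is open in $\nss$. Compactness of $\nss$ then forces only finitely many such orbits, and since $G$ permutes them transitively while preserving the Haar measure $\mu$, each has the same measure.

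Next I would argue that ergodicity forces $H$ to act transitively on the finite set $\nss/G_0$. Indeed, normality of $G_0$ in $G$ ensures that $H$ permutes the $G_0$-orbits, so $H$ acts on $\nss/G_0$. If this action had more than one orbit, then the union of $G_0$-orbits in any single $H$-orbit would be an $H$-invariant measurable subset of $\nss$ of $\mu$-measure strictly between $0$ and $1$, contradicting ergodicity. Transitivity of $\langle G_0, H\rangle$ on $\nss$ then follows at once: given $x,y\in\nss$, choose $h\in H$ with $hx$ in the same $G_0$-orbit as $y$, so that $y = g h x$ for some $g\in G_0$. Finally, since $\langle G_0, H\rangle = G_0 \cdot \langle H\rangle$ (by normality of $G_0$) contains the open set $G_0$, it is open in $G$, hence is a Lie subgroup of $G$, and so is itself a $k$-step nilpotent Lie group.

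The main potential obstacle is not the argument itself, which is essentially elementary group theory combined with the definition of ergodicity, but rather the correct invocation of the nilspace-theoretic inputs, namely that $\tran(\nss)$ is a $k$-step nilpotent Lie group acting transitively on a compact finite-rank $k$-step nilspace $\nss$; once these are in place from the established nilspace literature, the lemma follows quickly as above.
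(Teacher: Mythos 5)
Your ergodicity argument (the union of the $\tran(\nss)^0$-orbits lying in a single $H$-orbit would be an $H$-invariant set of measure strictly between $0$ and $1$) is exactly the engine of the paper's proof, and your final assembly of $g h$ with $g\in\tran(\nss)^0$, $h\in H$ matches the paper as well. However, there is a genuine gap in the geometric input: you assert that $\tran(\nss)$ acts \emph{transitively} on $\nss$ as ``a standard feature of compact finite-rank nilspaces,'' and you use this both to write $\nss=G/K$ and to deduce that the $\tran(\nss)^0$-orbits are open (via openness of $G_0K$) and of equal measure. This transitivity is not an established fact of the theory, and for disconnected nilspaces it can fail: translations of the factor $\nss_{k-1}$ need not lift to translations of $\nss$, so $\tran(\nss)$ may preserve each $\pi_{k-1}$-fiber and fail to be transitive. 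Worse, transitivity of $\tran(\nss)$ is essentially the \emph{conclusion} you are trying to reach (it follows from the lemma), so assuming it at the outset is circular. The result that is actually available --- and which the paper flags as the deep input (\cite[Corollary 3.3]{CamSzeg}) --- is only that $\tran(\nss)^0$ acts transitively on each \emph{connected component} of $\nss$.

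The gap is repairable without changing your overall structure: replace the coset-space argument by the following. Since $\nss$ has finite rank it is a finite-dimensional manifold, hence locally connected, so its connected components are open; by \cite[Corollary 3.3]{CamSzeg} the $\tran(\nss)^0$-orbits are precisely these components, which gives openness of the orbits directly. Each component then has positive measure because the Haar measure on $\nss$ is strictly positive (not because the orbits have equal measure, which again presupposed transitivity of $\tran(\nss)$). With openness and positivity in hand, your second and third paragraphs go through verbatim and coincide with the paper's proof. Your closing observation that $\langle\tran(\nss)^0,H\rangle=\tran(\nss)^0\cdot\langle H\rangle$ is an open, hence Lie, subgroup is fine, granted the known facts that $\tran(\nss)$ is a $k$-step nilpotent Lie group for finite-rank $\nss$.
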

\begin{remark}\label{rem:conn}
Note that no assumption on $\nss$ is made other than that it is of finite rank, hence no additional assumption on $\ns$ is needed in Theorem \ref{thm:nilsysinvlim}. In Lemma \ref{lem:transact} the ergodicity of the action of $H$ indeed suffices to deduce the claimed transitivity, but to show this we use a deep result, namely the transitivity of the action of the identity component $\tran(\nss)^0$ on each connected component of $\nss$, established in \cite[Corollary 3.3]{CamSzeg}.
\end{remark}
\begin{proof}
By \cite[Corollary 3.3]{CamSzeg} (see also \cite[Corollary 2.9.12]{Cand:Notes2}), if there is only one component in $\nss$ then we are done. Suppose, then, that there are at least two components. It suffices to prove the claim that for every two components $C,C'\subset \nss$ there is $g\in H$ such that $\mu(g(C)\cap C')>0$. Indeed, if this holds then for any $y,y'\in \nss$ there is $g'\in H$, and some $x$ in the component $C(y)$ containing $y$, such that $g'\cdot x\in C(y')$. Then, by the transitivity of $\tran(\nss)^0$ on $C(y),C(y')$, there are $\beta,\beta'\in \tran(\nss)^0$ with $\beta(y)=x$ and $\beta'\cdot g'\cdot x=y'$, so the element $g= \beta' g' \beta$ satisfies $g\cdot y=y'$, and the transitivity of $\langle\tran(\nss)^0, H\rangle$ follows.

To prove the claim, note first that since the compact nilspace $\nss$ has finite rank, it is a finite-dimensional manifold (see \cite[Lemma 2.5.3]{Cand:Notes2}) and is therefore locally connected, so each of its connected components is an open set \cite[Theorem 25.3]{Munkres}. Since the Haar measure $\mu$ on $\nss$ is strictly positive \cite[Proposition 2.2.11]{Cand:Notes2}, every component has positive measure. Let $C,C'$ be any two such components, and suppose for a contradiction that for every $g\in H$ we have $\mu(g(C)\cap C')=0$. Then the $H$-invariant set $\cup_{g\in H} g\cdot C$ is disjoint from $C'$ up to a $\mu$-null set, so it is an $H$-invariant set of measure strictly between 0 and 1, contradicting the ergodicity of $H$.
\end{proof}

\begin{remark}\label{rem:GMVrels}
As mentioned in the introduction, the results in this section are related to a result of Gutman, Manners and Varj\'u, namely the version of \cite[Theorem 1.29]{GMV3} for nilspace systems  mentioned in their paper after their Theorem 1.29. We obtain Theorem \ref{thm:nilsysinvlim} as a swift consequence of the fundamental factorization results for morphisms from previous sections, and this makes our proof markedly different from the arguments in \cite{GMV3}. Moreover, Theorem \ref{thm:nilsysinvlim} is a special case of Theorem \ref{thm:transinvlim}, which is a direct generalization of the inverse limit theorem \cite[Theorem 4]{CamSzeg}, and which concerns arbitrary (not just ergodic) finitely generated nilspace systems, whereas \cite[Theorem 1.29]{GMV3} concerns minimal group actions. Thus the results in this section and \cite[Theorem 1.29]{GMV3} are complementary.
\end{remark}

\noindent \textbf{Acknowledgements.} We are very grateful to the anonymous referee for useful remarks that helped to improve this paper.

\end{document}